\theoremstyle{plain}
\theoremstyle{definition}
\newtheorem{lemma}{Lemma}
\newtheorem{theorem}[lemma]{Theorem}
\newtheorem{assumption}{Assumption}
\newtheorem*{remark}{Remark}
\newtheorem*{example}{Example}
\theoremstyle{remark}
\newcommand{\R}{\mathbb{R}}
\newcommand{\C}{\mathbb{C}}
\newcommand{\E}{\mathbb{E}}
\newcommand{\N}{\mathbb{N}}
\newcommand{\Pbb}{\mathbb{P}}
\newcommand{\Ab}{\mathbf{A}}
\newcommand{\Bb}{\mathbf{B}}
\newcommand{\Ib}{\mathbf{I}}
\newcommand{\xb}{\mathbf{x}}
\newcommand{\yb}{\mathbf{y}}
\newcommand{\cb}{\mathbf{c}}
\newcommand{\fb}{\mathbf{f}}
\newcommand{\gb}{\mathbf{g}}
\newcommand{\Oc}{\mathcal{O}}
\newcommand{\Uc}{\mathcal{U}}
\newcommand{\Nc}{\mathcal{N}}
\newcommand{\Fc}{\mathcal{F}}
\newcommand{\Pc}{\mathcal{P}}
\newcommand{\Bc}{\mathcal{B}}
\newcommand{\Hc}{\mathcal{H}}
\newcommand{\argmin}{\mathop{\mathrm{argmin}}}
\newcommand{\minimize}{\mathop{\mathrm{minimize}}}
\newcommand{\vol}{\mathop{\mathrm{vol}}}
\newcommand{\indicator}{\mathbbm{1}}
\newcommand{\omegab}{\boldsymbol\omega}
\title{Solving Partial Differential Equations with Random Feature Models}
\author{Chunyang Liao\footnote{Email: \url{liaochunyang@math.ucla.edu}} --- University of California, Los Angeles}
\date{\today}
\begin{document}
\pagenumbering{arabic}
\maketitle

\begin{abstract}
Machine learning based partial differential equations (PDEs) solvers have received great attention in recent years. 
Most progress in this area has been driven by deep neural networks such as physics-informed neural networks (PINNs) and kernel method.
In this paper, we introduce a random feature based framework toward efficiently solving PDEs. 
Random feature method was originally proposed to approximate large-scale kernel machines and can be viewed as a shallow neural network as well.
We provide an error analysis for our proposed method along with comprehensive numerical results on several PDE benchmarks. 
In contrast to the state-of-the-art solvers that face challenges with a large number of collocation points, our proposed method reduces the computational complexity.
Moreover, the implementation of our method is simple and does not require additional computational resources.
Due to the theoretical guarantee and advantages in computation, our approach is proven to be efficient for solving PDEs.
\end{abstract}

\noindent {\bf Keywords:} Random Feature, Partial Differential Equations, Scientific Machine Learning, Error Analysis

\section{Introduction}
Solving partial differential equations (PDEs) is a fundamental question in science and engineering. 
Traditional numerical methods include finite element method and finite difference method. 
Recently, the use of machine learning tools for solving PDEs, or in general any complex scientific tasks, has led to a new area of {\it scientific machine learning}.
Unlike traditional numerical methods, machine learning (ML) based methods do not rely on complex mesh designs and intricate numerical techniques. 
Therefore, it enables simpler, faster, and more convenient implementation and use. 
The most prominent ML-based solver is physics-informed neural network (PINN) \cite{Raissi2019PINN}, which uses a deep neural network to approximate the PDE solution. 
Given a set of collocation points in a spatiotemporal domain $\Omega$, we parametrize the PDE solution as a neural network satisfying PDE, boundary conditions, and initial conditions at given collocation points.
This approach leads to solving an optimization problem where the objective function measures the PDE residual with respect to some loss functional. 
Finding the solution of PDE is equivalent to optimizing the neural network parameters by using variants of stochastic gradient descent.  
PINN and its variations have achieved great success in learning the PDE solutions. However, it is extremely hard and expensive to optimize all parameters in the deep neural network. To reduce the computational complexity, some recent works \cite{WANG2024Extreme} proposed to use randomized neural networks to solve PDEs. Randomized neural network is a special type of neural networks with some parameters are randomly generated from a known probability distribution, instead of being optimized. This strategy was proven to reduce the computational time as well as maintain the approximation accuracy, see numerical experiments in \cite{WANG2024Extreme}. 
While these neural network based methods are often used in practice, the theoretical analysis relies on the universal approximation property of deep neural network, which shows the existence of a network of a requisite size achieving a certain error rate. 
However, the existence result does not guarantee that the network is computable in practice.
 
Instead of using deep neural networks, kernel method/Gaussian process (GPs) are also used to learn the PDE solutions \cite{xu2024kernel, fang2024solving, CHEN20211kernelPDE, BATLLE2025Error}. 
The main idea of such method is to approximate the solution of a given PDE as an element in a reproducing kernel Hilbert space (RKHS). 
This element will be found by solving an optimal recovery problem constrained by a PDE at collocation points \cite{CHEN20211kernelPDE,Foucart2022LearningFN}.
An optimal recovery problem can also be interpreted as maximum a posterior (MAP) estimation for a Gaussian process constrained by a PDE.
The key to solving an optimal recovery problem is a celebrated representer theorem that characterizes the minimizer as a finite-dimensional representation formula, which is easy to implement and interpret. 
Moreover, kernel-based PDE solving methods are also supported by rigorous theoretical foundation. Specifically, the authors provided a detailed priori error estimates in \cite{BATLLE2025Error}. 
However, the computational efficiency of kernel method can be a significant drawback. Specifically, it does not scale well when the sample size is large. For example, given $m$ training collocation points, kernel method requires $\Oc(m^3)$ training time and $\Oc(m^2)$ to store the kernel matrix, which is often computationally infeasible when $m$ is large.

To overcome the computational bottleneck in kernel method, random feature method was proposed to approximate large-scale kernel machines \cite{Rahimi2007RFM}. 
The main idea of random features is mapping data into a low-dimensional randomized feature space. Then, the kernel matrix is approximated by a low-rank matrix, which reduces the computational and storage costs of operating on kernel matrix.
The random feature model can be viewed as a randomized two-layer neural network.
The weights connecting input layer and single hidden layer are randomly generated from a known distribution rather than trainable parameters. Only the weights on the output layer are trainable.

In this paper, we propose a random feature based PDE solver. Since random feature model is a type of randomized neural network and an approximation of kernel method, the computational complexity can be reduced significantly. Moreover, we provide a convergence analysis of our method. 
The key contributions of our work are summarized as follows:
\begin{itemize}
\item {\bf Framework:} We propose a random feature framework for solving PDEs. By minimizing the PDE residuals, our method does not require the construction of kernel matrix. In practice, this framework allows us to use the modern automatic differential libraries, which is straightforward and convenient.

\item {\bf Convergence Analysis:} We provide a detailed convergence analysis of our proposed framework under some mild and widely used assumptions on PDEs. Our convergence analysis contains two steps: the first step follows the standard convergence analysis of kernel method \cite{BATLLE2025Error}; the second step concerns the approximation of kernel method using random feature method. To the best of our knowledge, this is the first work providing a convergence analysis of random feature PDE solving method. 

\item {\bf Numerical Experiments:} We test the performance of our framework with nonlinear elliptic PDEs, (high-dimensional) nonlinear Poisson PDEs, Allen-Cahn equation, and Advection-diffusion equation. Our method requires less computational resources to train the model with a similar or better performance compared with the existing methods on all benchmarks. We also numerically verify the convergence rate obtained from our analysis for all problems.
\end{itemize}

The remaining of this paper is organized as follows. In Section \ref{Sec:RF}, we give an overview of random feature method and provide a framework for solving PDEs using random feature model along with error analysis. 
Section \ref{Sec:numerics} is dedicated to the numerical experiments for solving PDEs with random feature method. We compare our method with PINN on several benchmarks and provide a empirical convergence study. 
We conclude the paper with a summary of the results and some possible future directions in Section \ref{Sec:conclusion}.

\section{Random Feature}
\label{Sec:RF}

\subsection{Overview of Random Feature Method}

To introduce random feature method, we first give a short introduction to kernel method, which is also known as kernel trick. 
It is one of the popular techniques for capturing nonlinear relations between features and targets. Let $\xb,\xb'\in X\subset\R^d$ be samples and $\phi:X\to\Hc$ be a feature map transforming samples to a high-dimensional (even infinite-dimensional) reproducing kernel Hilbert space $\Hc$ where the mapped data can be learned by a linear model.
In practice, the explicit expression of feature map $\phi$ is not necessarily known to us. The inner produce between $\phi(\xb)$ and $\phi(\xb')$ endowed by $\Hc$ can be computed by using a kernel function $k(\cdot,\cdot):X\times X\to\R$, i.e.
\begin{equation*}
k(\xb,\xb') = \langle \phi(\xb), \phi(\xb')\rangle_\Hc.
\end{equation*}
Due to the ease of computing the inner product, kernel method is effective for nonlinear learning problems with a wide range of successful applications \cite{smola2001kernel}.
However, kernel method does not scale well to extremely large datasets. For example, given $m$ training samples, kernel regression requires $\Oc(m^3)$ training time and $\Oc(m^2)$ to store the kernel matrix, which is often computationally infeasible when $m$ is large.
Random feature method is one of the most popular techniques to overcome the computational challenges of kernel method. 
The theoretical foundation of random (Fourier) feature builds on the following classical result from harmonic analysis.
\begin{theorem}[Bochner \cite{bochner1955harmonic}]
A continuous shift-invariant kernel $k(\xb,\xb') = k(\xb-\xb')$ on $\R^d$ is positive definite if and only if $k(\delta)$ is the Fourier transform of a non-negative measure.
\end{theorem}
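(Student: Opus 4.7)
The plan is to prove the two implications separately; as is typical for Bochner-type statements, one direction is a short computation and the other requires genuine harmonic analysis. For the easy direction (sufficiency), suppose $k(\delta)=\int_{\R^d}e^{i\omega\cdot\delta}\,d\mu(\omega)$ for some finite non-negative Borel measure $\mu$. For any finite collection $\xb_1,\ldots,\xb_n\in\R^d$ and scalars $c_1,\ldots,c_n\in\C$, I would substitute into the quadratic form and interchange sum and integral to obtain
\[
\sum_{j,\ell} c_j\bar c_\ell\, k(\xb_j-\xb_\ell)=\int_{\R^d}\Bigl|\sum_{j=1}^n c_j e^{i\omega\cdot\xb_j}\Bigr|^2 d\mu(\omega)\ \geq\ 0,
\]
which is exactly positive definiteness. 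Continuity of $k$ follows from dominated convergence since $|e^{i\omega\cdot\delta}|=1$ and $\mu$ is finite.

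For the harder direction (necessity), assume $k$ is continuous, shift-invariant, and positive definite. First I would establish basic regularity: by testing positive definiteness on two-point configurations one gets $|k(\delta)|\le k(0)$, and combining this with continuity at $0$ yields uniform continuity on $\R^d$. The core idea is then to view $k$ as a tempered distribution and show its distributional Fourier transform is a non-negative measure. Concretely, for any Schwartz test function $\phi$, I would approximate the double integral
\[
I(\phi)=\int_{\R^d}\!\!\int_{\R^d}\phi(\xb)\overline{\phi(\yb)}\,k(\xb-\yb)\,d\xb\,d\yb
\]
by Riemann sums of the form $\sum c_j\bar c_\ell k(\xb_j-\xb_\ell)$ (using uniform continuity to control the error), concluding $I(\phi)\ge 0$. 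By Plancherel this rewrites as a pairing of $\hat k$ with $|\hat\phi|^2$, showing that $\hat k$ is a positive tempered distribution.

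To upgrade this positivity to an honest non-negative measure, I would regularize $k$ by convolving with a Gaussian approximate identity $g_\varepsilon$: the product $k\cdot g_\varepsilon$ lies in $L^1$, is still positive definite (as a product of two positive definite functions), and its classical Fourier transform is therefore a continuous non-negative function $\rho_\varepsilon\ge 0$ by the same computation as the easy direction applied in reverse. The measures $d\mu_\varepsilon=\rho_\varepsilon\,d\omega$ have uniformly bounded total mass $k(0)g_\varepsilon(0)/(2\pi)^{d/2}$ (bounded after rescaling), so by Prokhorov/Banach-Alaoglu a weak-$*$ limit $\mu$ exists along a subsequence, and continuity of $k$ lets me identify $k(\delta)=\int e^{i\omega\cdot\delta}d\mu(\omega)$ by passing to the limit. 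The main obstacle is exactly this last step: $k$ itself is not integrable, so one cannot take its Fourier transform directly, and the measure $\mu$ must be extracted via the regularization-and-compactness argument above while checking tightness so that no mass escapes to infinity, which is where the continuity of $k$ at the origin is indispensable.
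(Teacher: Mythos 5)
The paper does not actually prove this statement: it is quoted as a classical theorem of Bochner and supported only by the citation to the literature, so there is no in-paper argument to compare yours against. Judged on its own, your sketch follows the standard textbook route (easy direction by expanding the quadratic form under the integral; hard direction by viewing $\hat k$ as a positive tempered distribution, regularizing to land in $L^1$, and extracting the measure by weak-$*$ compactness plus a tightness check), and the overall architecture is sound. Three points should be tightened. First, you write ``convolving with a Gaussian approximate identity'' but then work with ``the product $k\cdot g_\varepsilon$''; these are different operations, and only the multiplicative one works here. You need $g_\varepsilon(\xb)=e^{-\varepsilon\|\xb\|_2^2}$, a Gaussian that increases to $1$ (so $g_\varepsilon(0)=1$ and the total masses $\int\rho_\varepsilon\,d\omegab$ are uniformly bounded by a constant times $k(0)$); convolving $k$ with an approximate identity would not make it integrable. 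Second, uniform continuity does not follow from $|k(\delta)|\le k(0)$ together with continuity at the origin alone; the standard argument uses the three-point positive-definiteness test, which yields $|k(\xb)-k(\yb)|^2\le 2k(0)\bigl(k(0)-\mathrm{Re}\,k(\xb-\yb)\bigr)$, and uniform continuity then follows from continuity at $0$. Third, the assertion that the classical Fourier transform of the $L^1$ positive definite function $k\cdot g_\varepsilon$ is non-negative is not literally ``the easy direction in reverse''; it is a separate lemma, proved by the same $I(\phi)\ge 0$ computation applied to suitable approximate identities, and it also requires observing that $k\cdot g_\varepsilon$ remains positive definite because $g_\varepsilon$ is itself positive definite (a product of positive definite functions is positive definite by the Schur product theorem). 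With these repairs the argument is the standard complete proof; you are right that the tightness step, controlled by the continuity of $k$ at the origin, is where the real content lies.
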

Bochner’s theorem guarantees that the Fourier transform of kernel $k(\delta)$ is a proper probability distribution if the kernel is scaled properly. 
Precisely, given a shift-invariant kernel function $k(\delta)$ on $\R^d$, the probability distribution $\rho(\omegab)$ is derived by applying the Fourier transform, i.e.
\begin{equation}
\rho(\omegab) = \int_{\R^d} \exp(-i\langle \omegab,\delta\rangle) k(\delta) d\delta.
\end{equation}
Applying Bochner's Theorem, we are able to represent the kernel function as an integral form.
Denote the probability distribution by $\rho(\omegab)$, we have 
\begin{equation}
\label{kernel}
k(\xb,\xb') = \int_{\R^d} \exp(i\langle \omegab,\xb-\xb'\rangle)d\rho(\omegab) = \int_{\R^d} \exp(i\langle \omegab,\xb\rangle) \overline{\exp(i\langle \omegab,\xb'\rangle)}d\rho(\omegab),   
\end{equation}
where the first inequality holds because of the inverse Fourier transform.
Using Monte Carlo sampling technique, we randomly generate $N$ i.i.d samples $\{\omegab_k\}_{k\in[N]}$ from $\rho(\omegab)$ and define a {\it random Fourier feature map} $\phi:\R^d\to\C^N$ as \footnote{It depends on the i.i.d samples $\{\omegab_k\}_{k\in[N]}$ from $\rho(\omegab)$. To simplify the notation, we omit the dependency on $\{\omegab_k\}_{k\in[N]}$ when we define $\phi$.}
\begin{equation}
\phi(\xb) = \frac{1}{\sqrt{N}}\Big[\exp(i\langle \omegab_1, \xb\rangle), \dots, \exp(i\langle\omegab_N,\xb\rangle) \Big]^T \in\C^N.
\end{equation}
Using random Fourier feature map, we can define a kernel function $\hat{k}(\xb,\xb'):X\times X\to\R$ as
\begin{equation*}
\hat{k}(\xb,\xb') := \langle \phi(\xb),\phi(\xb')\rangle. 
\end{equation*}
Kernel function $\hat{k}$ is finite dimensional since the random Fourier feature map $\phi$ transforms data to a finite dimensional space $\C^N$. We can also use {\it random cosine features} to approximate any shift-invariant kernel. Specifically, setting random feature $\omegab\in\R^d$ generated from $\rho(\omegab)$ and $b\in\R$ sampled from uniform distribution on $[-\pi,\pi]$, the random cosine feature is defined as 
$\cos(\langle \omegab,\xb\rangle + b)$. Similarly, we can define a {\it random cosine feature map} $\phi:\R^d\to\R^N$ as 
\begin{equation}
\phi(\xb) = \frac{1}{\sqrt{N}}\Big[\cos(\langle \omegab_1, \xb\rangle + b_1), \dots, \cos(\langle\omegab_N,\xb\rangle+b_N) \Big]^T \in\R^N,
\end{equation}
using random i.i.d samples $\{\omegab_k,b_k\}_{k\in[N]}$, and hence define a kernel function $\hat{k}:X\times X\to\R$ using random cosine feature map.
We could approximate the original shift-invariant kernel function $k$ defined in \eqref{kernel} by the finite-dimensional kernel $\hat{k}$.
Utilizing random features allows efficient learning with $\Oc(mN^2)$ time and $\Oc(mN)$ storage capacity.

\subsection{Random Feature Regression}
Now, we set ourselves to the regression setting where our aim is to learn a function $f:\R^d\to\R$ using training samples $\{(\xb_j, y_j)\}_{j\in[m]}$. 
To implement the random feature model, we first draw $N$ i.i.d random features $\{\omegab_k\}_{k\in[N]} \subset \R^d$ from a probability distribution $\rho(\omegab)$, and then construct an approximation for target function $f$ taking the form 
\begin{equation}
\label{rf_train}
f^\sharp(\xb) = \sum_{k=1}^N c_k^\sharp \phi( \xb, \omegab_k).
\end{equation}
We assume that the $m$ sampling points $\xb_j$’s are drawn from a certain distribution with the corresponding output values
\begin{equation*}
y_j = f(\xb_j) + e_j, \quad \mbox{ for all } j\in[m],
\end{equation*}
where $e_j$ is the measurement noise. 
Let $\Ab\in\R^{m\times N}$ be the random feature matrix defined component-wise by $\Ab_{j,k}=\phi( \xb_j, \omegab_k)$ for $j\in[m]$ and $k\in[N]$. Training the random features model \eqref{rf_train} is equivalent to finding the coefficient vector $\cb^\sharp\in\R^N$ such that $\Ab\cb^\sharp \approx \yb$, where $\cb^\sharp = [c_1^\sharp,\dots,c_N^\sharp]^\top\in\R^N$ and $\yb=[y_1,\dots,y_m]\in\R^m$.

In the under-parameterized regime where we have more measurements than features ($m\geq N$), the coefficients are trained by solving the (regularized) least squares problem:
\begin{equation*}
\cb_\lambda^\sharp \in \argmin_{\cb\in\R^N} \|\Ab\cb-\yb\|_2^2 + \lambda \|\cb\|_2^2,
\end{equation*}
where $\lambda>0$ is the regularization parameter. It is also referred to ridge regression since the ridge regularization term $\lambda\|\cb\|_2^2$ is added. 

Recently, over-parametrized models have received great attention since those trained models not only fit the training samples exactly but also predict well on unseen test data \cite{Belkin2018kernel,Liang2020Kernel}. In the over-parametrized regime, we have more features than measurements ($N\geq m$), and we consider training the coefficient vector $\cb^\sharp\in\R^N$ using the min-norm interpolation problem:
\begin{equation*}
\cb^\sharp \in \argmin_{\cb\in\R^N} \|\cb\|_2^2 \quad \mbox{ subject to } \Ab\cb = \yb.
\end{equation*}
This problem is also referred to ridgeless regression problem since the solution $\cb^\sharp$ can be viewed as the limit of $\cb_\lambda^\sharp$ as $\lambda\to0$.

The generalization analysis of random features models have been of recent interest  \cite{Rahimi2008weighted,Rudi2017generalizationRFM,E2020NN,Sun2018SVM_RF,Mei2019TheGE,chen2024conditioning,Liao2024Differ}. In \cite{Rahimi2008weighted}, the authors showed that the random feature model yields a test error of $\Oc(N^{-\frac{1}{2}}+m^{-\frac{1}{2}})$ when trained on Lipschitz loss functions. Therefore, the generalization error is $\Oc(N^{-\frac{1}{2}})$ for large $N$ if $m\asymp N$. However, the model is trained by solving a constrained optimization problem which is rarely used in practice. 
In \cite{Rudi2017generalizationRFM}, it was shown that for $f$ in an RKHS, using $N=\Oc(\sqrt{m}\log(m))$ features is sufficient to achieve a test error of $\Oc(m^{-\frac{1}{2}})$ with squared loss. 
In \cite{E2020NN}, the authors showed that a regularized model can achieve $N^{-1}+m^{-\frac{1}{2}}$ risk provided that the target function belonging to an RKHS. Nevertheless, results in \cite{Rudi2017generalizationRFM,E2020NN} depend on the assumptions of kernel and a certain decay rate of second moment operator, which may be difficult to verify in practice. 
Extending the results of random feature models from squared loss to 0-1 loss, the authors of \cite{Sun2018SVM_RF} showed that the support vector machine with random features $N\ll m$ can achieve the learning rate faster than $\Oc(m^{-\frac{1}{2}})$ on a training set with $m$ samples.
In \cite{Mei2019TheGE}, the authors computed the precise asymptotic bound of the test error, in the limit $N,m,d \to\infty$ with $N/d$ and $m/d$ fixed. 
In \cite{chen2024conditioning}, the authors derived non-asymptotic bounds including both the under-parametrized setting using (regularized) least square problem and the over-parametrized setting using min-norm problem or sparse regression. Their results relied on the condition numbers of the random feature matrix and indicated double descent behavior in random feature models. However, the target function space is a subset of a RKHS, which limits the approximation ability of random feature model. 
In \cite{Liao2024Differ}, the authors consider a RKHS as target function space and derived a similar non-asymptotic bound by utilizing different proof techniques.

\begin{table}[!htbp]
\centering
\begin{tabular}{c|c|c}
Kernel name  & $k(\xb,\xb')$ & $\rho(\omegab)$ \\\hline
Gaussian kernel & $\exp(-\gamma\|\xb-\xb'\|_2^2)$, $\gamma>0$  & $\left(2\pi(2\gamma)^2\right)^{-d/2}\exp(-\frac{\|\omegab\|_2^2}{2(2\gamma)^2})$ \\
Laplace Kernel & $\exp(-\gamma\|\xb-\xb'\|_1)$, $\gamma>0$ & $\left(\frac{2}{\pi}\right)^d\prod\limits_{j=1}^d\frac{\gamma}{\gamma^2+\omega_j^2}$ \\
\end{tabular}
\caption{Commonly used kernels and the corresponding Fourier density}
\label{tab:kernel}
\end{table}

Furthermore, the random feature model can be viewed as a two-layer (one hidden layer) neural network where the weights connecting input layer and hidden layer and biases are sampled randomly and independently from a known distribution. Only the weights of the output layer are trainable using training samples, and hence it leads to solving a convex optimization problem when training random feature models.
There are other types of randomized neural network, including the random vector functional link (RVFL) network \cite{Deedell2024RVFL,MALIK2023RVFL}, and the extreme learning machine (ELM) \cite{HUANG2006ELM,Huang2006Universal,Wang2022ELM}, among others. 
Sharing the same structure as the random feature model, RVFL network is also a shallow neural network where the input-to-hidden weights and biases are randomly selected. 
However, the motivations of two models are different. RVFL networks were designed to address the difficulties associated with training deep neural networks and weights are usually sampled from uniform distribution. 
Random feature models were originally used to approximate large-scale kernel machines, and hence random weights depend on the kernel function, see Table \ref{tab:kernel} for some examples of commonly used kernels and the corresponding densities. 
Extreme learning machine is one type of deep neural network (more than two hidden layers) in which all the hidden-layer weights are randomly selected and then fixed. Only the output-layer coefficients are trained.
Theoretical guarantees on RVFL and ELM suggest that they are universal approximators, see \cite{Igelnik1995randomized}. 

\subsection{Random Feature Models for Solving PDEs}

In this section, we present our framework for solving PDEs using random feature models. Let us consider the PDE problem of the general form
\begin{equation}
\label{PDE}
\begin{aligned}
& \Pc[u](\xb) = 0, && \xb\in\Omega \\
& \Bc[u](\xb) = 0, && \xb \in \partial\Omega, 
\end{aligned}
\end{equation}
where $\Omega\subset\R^d$ is the domain with the boundary $\partial\Omega$, $\Pc$ is the interior differential operator and $\Bc$ is the boundary differential operator. 
For the sake of brevity, we assume that the PDE is well-defined pointwise and has a unique strong solution throughout this paper.
We propose to solve the PDE problem \eqref{PDE} by using a random feature model. More precisely, let $\{\xb_j\}_{j\in[M]}$ be a collection of collocation points such that $\{\xb_j\}_{j\in[M_\Omega]}$ is a collection of points in the interior of $\Omega$ and $\{\xb_j\}_{j=M_\Omega+1}^M$ is a set of points on the boundary $\partial\Omega$. 
Random features $\{\omegab_k\}_{k\in[N]}$ are randomly drawn from a known distribution $\rho(\omegab)$. The random feature model takes the form
\begin{equation}
\label{RF_form}
u^\sharp(\xb) = \sum_{k=1}^N c_k^\sharp \phi( \xb, \omegab_k)
\end{equation}
We consider the over-parametrization regime where the number of random features $N$ is greater than the number of collocation points $M$. This assumption comes from the fact that collocation points are hard to obtain in practice.

We train the random feature model by solving the following optimization problem:
\begin{equation}
\label{MinnormRF_PDE}
\begin{aligned}
& \minimize_{\cb\in\R^N} && \|\cb\|_2^2 \\
& \mbox{s.t.} && \Pc[u^\sharp](\xb_j) = 0, \quad \mbox{ for } j=1,\dots,M_\Omega \\
& && \Bc[u^\sharp](\xb_j) = 0, \quad \mbox{ for } j=M_\Omega+1,\dots,M
\end{aligned}
\end{equation}
We wish to find an approximation of the true solution $u$ with the min-norm random feature model satisfying the PDE and boundary data at collocation points. 

{\color{red}

\begin{algorithm}
\caption{Random Feature Model based PDE solver}
\label{Alg:rf1}
\begin{algorithmic}
\State\textbf{Inputs:} Collocation points $\{\xb_j\}_{j\in[M]}$, number of random features $N$, and probability distribution $\rho$
\State\textbf{Outputs:} random feature approximation $u^\sharp$
\State 1. Sample $N$ random features $\{ \omegab_k \}_{k\in[N]}$ from $\rho$ independently.
\State 2. Consider the random feature model taking the form \eqref{RF_form}.
\State 3. Solve the optimization problem \eqref{reg_RF_PDE} to produce the coefficient vector $\cb^\sharp$.
\State 4. Return the approximated PDE solution $u^\sharp$ with coefficient vector $\cb^\sharp$ obtained from Step 3.
\end{algorithmic}
\end{algorithm}
}

\begin{example}[Linear PDE]
If the PDE is linear, we can rewrite the constraints in \eqref{MinnormRF_PDE} as a linear system. Consider the following linear PDE 
\begin{equation*}
\begin{aligned}
-\Delta u(\xb) + u(\xb) &= f(\xb), \quad && \xb\in\Omega \\
u(\xb) &= g(\xb), \quad && \xb\in\partial\Omega, 
\end{aligned}
\end{equation*}
we can write the condition as the following linear system
\begin{equation}
\label{linear_PDE}
\begin{bmatrix}
\Ab \\\hline \Bb 
\end{bmatrix}\cb = \begin{bmatrix}
\fb \\\hline \gb
\end{bmatrix},
\end{equation}
where $\Ab\in\R^{M_\Omega\times N}$ and $\Bb\in\R^{(M-M_\Omega)\times N}$ are defined component-wise by $\Ab_{j,k} = \phi( \xb_j, \omegab_k) - \Delta\phi( \xb_j, \omegab_k)$ \footnote{Precisely, $\Delta\phi( \xb_j, \omegab_k)$ means that evaluation of $\Delta\phi( \xb, \omegab_k)$ at point $\xb=\xb_j$. } and by $\Bb_{j,k} = \phi( \xb_j, \omegab_k)$, respectively. Two vectors on the right-hand side are defined as $\fb=[f(\xb_1),\dots,f(\xb_{M_\Omega})]^\top\in\R^{M_\Omega}$ and $\gb=[g(\xb_{M_\Omega+1}),\dots,g(\xb_M)]^\top\in\R^{M-M_\Omega}$. 
In this case we compute $\cb\in\R^N$ by using the least squares method if the system is overdetermined or using the min-norm method if the system is underdetermined. However, this example cannot be generalized to nonlinear PDEs since the nonlinear PDE problem results in a nonlinear term in terms of coefficient vector $\cb$.
\end{example}
For general nonlinear PDE problems, we can neither write optimization problem \eqref{MinnormRF_PDE} as a solving linear system problem nor solve it directly. Therefore, we may solve an unconstrained optimization problem with regularization instead,
\begin{equation}
\label{reg_RF_PDE}
\minimize_{\cb\in\R^N} \|\cb\|_2^2 + \lambda_1 \sum_{j=1}^{M_\Omega}\left(\Pc[u^\sharp](\xb_j)\right)^2 + \lambda_2 \sum_{j=M_\Omega+1}^M \left(\Bc[u^\sharp](\xb_j)\right)^2,
\end{equation}
where $\lambda_1,\lambda_2>0$ are regularization parameters. When $\lambda_1,\lambda_2\to0$, the solution of \eqref{reg_RF_PDE} converges to the solution of \eqref{MinnormRF_PDE}. 
In practice, we use variants of stochastic gradient descent and the modern automatic differential libraries to solve problem \eqref{reg_RF_PDE}.
In scenarios where PDEs are challenging, a large number of collocation points are required to capture the solution details. 
Compared with the framework using the standard kernel matrix to construct the solution approximation in \cite{xu2024kernel}, our proposed random feature method approximates the kernel matrix, and hence reduces the computational cost and accelerate computation involving the standard kernel matrix (and its inverse) when dealing with massive collocation points.

\subsection{Convergence Analysis}
In this section, we show the convergence analysis of our method. 
Our convergence analysis relies on the standard convergence analysis of kernel method in \cite{BATLLE2025Error} and the kernel approximation by using random features.
Recall the kernel-based method for solving PDEs in \cite{CHEN20211kernelPDE,BATLLE2025Error}, a reproducing kernel Hilbert space (RKHS) $\Hc$ is chosen and we aim to solve the following
\begin{equation}
\label{Minnorm_KernelPDE}
\begin{aligned}
& \minimize_{u\in\Hc} && \|u\|_\Hc \\
& \mbox{s.t.} && \Pc[u^\sharp](\xb_j) = 0, \quad \mbox{ for } j=1,\dots,M_\Omega \\
& && \Bc[u^\sharp](\xb_j) = 0, \quad \mbox{ for } j=M_\Omega+1,\dots,M
\end{aligned}
\end{equation}
We first state the main assumptions on the domain $\Omega$ and its boundary $\partial\Omega$, the PDE operators $\Pc$ and $\Bc$, and the reproducing kernel Hilbert space $\Hc$.
\begin{assumption}
\label{ass}
The following assumptions hold:
\begin{itemize}
\item {\bf (C1) Regularity of the domain and its boundary} $\Omega\subset\R^d$ with $d>1$ is a compact set and $\partial\Omega$ is a smooth connected Riemannian manifold of dimension $d-1$ endowed with a geodesic distance $\rho_{\partial\Omega}$.
\item {\bf (C2) Stability of the PDE} There exist $\gamma>0$ and $k,t\in\N$ satisfying $d/2<k+\gamma$ and $(d-1)/2<t+\gamma$, and $s,\ell\in\R$ so that for any $r>0$ it holds that, for any $u_1,u_2\in B_r(H^\ell(\Omega))$,
\begin{equation*}
\|u_1-u_2\|_{H^\ell(\Omega)} \leq C\left( \|\Pc(u_1) - \Pc(u_2)\|_{H^k(\Omega)} + \|\Bc(u_1) - \Bc(u_2)\|_{H^t(\partial\Omega)} \right),
\end{equation*}
and for any $u_1, u_2\in B_r(H^s(\Omega))$,
\begin{equation*}
\|\Pc(u_1) - \Pc(u_2)\|_{H^{k+\gamma}(\Omega)} + \|\Bc(u_1) - \Bc(u_2)\|_{H^{t+\gamma}(\partial\Omega)} \leq C\|u_1-u_2\|_{H^s(\Omega)},
\end{equation*}
where $C= C(r)>0$ is a constant independent of $u_1$ and $u_2$.
\item {\bf (C3)} The RKHS $\Hc$ is continuously embedded in $H^s(\Omega)$.
\end{itemize}
\end{assumption}

Item (C1) is a standard assumption when analyzing PDEs. Item (C2) assumes that the PDE to be Lipschitz well-posed with respect to the right hand side/source term. It relates to the analysis of nonlinear PDEs and is independent of our numerical scheme. 
Assumption (C3) dictates the choice of the RKHS $\Hc$ , and in turn the kernel, which should be carefully selected based on the regularity of the strong solution $u$. 
We are now ready to state the first theorem which concerns the convergence rate of kernel method

\begin{theorem}[Theorem 3.8, \cite{BATLLE2025Error}] 
\label{Thm:kernel}
Suppose Assumption \ref{ass} is satisfied and denote the unique strong solution of \eqref{PDE} by $u\in\Hc$. Let $\hat{u}$ be a minimizer of \eqref{Minnorm_KernelPDE} with interior collocation points $X_\Omega$ and collocation points on the boundary $X_{\partial\Omega}$. Define the fill-in distances
\begin{equation*}
h_\Omega := \sup_{\xb'\in\Omega}\inf_{\xb\in X_\Omega} \|\xb-\xb'\|_2, \qquad  h_{\partial\Omega} := \sup_{\xb'\in\partial\Omega}\inf_{\xb\in X_{\partial\Omega}} \rho_{\partial\Omega}(\xb,\xb'),
\end{equation*}
and set $h=\max(h_\Omega, h_{\partial\Omega})$. Then there exists a constant $h_0$ so that if $h<h_0$ then
\begin{equation*}
\|u-\hat{u}\|_{H^s(\Omega)} \leq Ch^\gamma\|u\|_\Hc,
\end{equation*}
where $C>0$ is a constant independent of $h$ and $u$.
\end{theorem}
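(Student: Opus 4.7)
The plan is the classical optimal-recovery strategy: use feasibility to control the native (RKHS) norm of the minimizer, then write the error in terms of PDE residuals via the stability assumption (C2), and finally exploit the fact that those residuals vanish at the collocation points through a Sobolev sampling inequality, which produces the $h^\gamma$ factor.

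First I would note that $u$ is feasible for \eqref{Minnorm_KernelPDE} because it is a strong solution, so optimality gives $\|\hat u\|_\Hc \le \|u\|_\Hc$. Combined with the continuous embedding (C3), both $u$ and $\hat u$ then lie in a ball $B_r(H^s(\Omega))$ with $r\asymp\|u\|_\Hc$, which allows me to apply (C2) with a constant depending only on $\|u\|_\Hc$.

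Next, set $R_\Omega:=\Pc(u)-\Pc(\hat u)$ and $R_{\partial\Omega}:=\Bc(u)-\Bc(\hat u)$. By the constraints, both residuals vanish at the collocation points. The first half of (C2) yields $\|u-\hat u\|_{H^\ell(\Omega)}\le C\bigl(\|R_\Omega\|_{H^k(\Omega)}+\|R_{\partial\Omega}\|_{H^t(\partial\Omega)}\bigr)$. The central ingredient is a Sobolev sampling inequality of Narcowich--Ward--Wendland type: because $k+\gamma>d/2$, any $v\in H^{k+\gamma}(\Omega)$ vanishing on a discrete set of fill distance $h_\Omega\le h_0$ satisfies $\|v\|_{H^k(\Omega)}\le Ch_\Omega^\gamma\|v\|_{H^{k+\gamma}(\Omega)}$, with an analogous statement on the Riemannian boundary $\partial\Omega$ using the threshold $t+\gamma>(d-1)/2$ and the geodesic distance $\rho_{\partial\Omega}$ from (C1). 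Applying this to $R_\Omega$ and $R_{\partial\Omega}$, then using the second half of (C2) to bound the higher-regularity residual norms by $\|u-\hat u\|_{H^s(\Omega)}$, and finally invoking the a priori bound from the previous paragraph, I obtain the chain $\|u-\hat u\|_{H^\ell}\le Ch^\gamma\|u-\hat u\|_{H^s}\le Ch^\gamma\|u\|_\Hc$, which gives the theorem (identifying $\ell$ with $s$ as is consistent with the statement of (C2)).

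The main technical obstacle is the Sobolev sampling inequality on the manifold boundary, where one must transfer Euclidean-style localization and polynomial-reproduction arguments to a smooth $(d-1)$-dimensional Riemannian manifold; this is exactly where the regularity asserted in (C1) is used, and it dictates the asymmetric thresholds ($k+\gamma>d/2$ versus $t+\gamma>(d-1)/2$) in (C2). The constant $h_0$ in the conclusion is the common smallness required both for the sampling inequality and for the stability constant of (C2) to remain uniform over the fixed $H^s$-ball in which $\hat u$ lives.
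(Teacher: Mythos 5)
The paper does not actually prove this theorem---it is imported verbatim from \cite{BATLLE2025Error}---and your sketch correctly reproduces the argument given there: feasibility of $u$ for \eqref{Minnorm_KernelPDE} to get $\|\hat u\|_\Hc\le\|u\|_\Hc$, the two halves of (C2) to pass between the $H^\ell$ error and the PDE/boundary residuals, and a Narcowich--Ward--Wendland-type sampling inequality on $\Omega$ and on the manifold $\partial\Omega$ to extract the $h^\gamma$ factor from the vanishing of those residuals at the collocation points. The only wrinkle is your closing ``identifying $\ell$ with $s$'': the chain you build naturally delivers the bound in $H^\ell(\Omega)$ (with $\|u-\hat u\|_{H^s}$ controlled via (C3) and the norm comparison), and one cannot in general upgrade from $\ell$ to $s$, so the $H^s$ on the left-hand side of the theorem as restated in this paper should be read as $H^\ell$---an issue with the transcription of the statement, not with your proof.
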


\begin{remark}
The above theorem relies on the assumption that the unique strong solution $u$ belongs to a RKHS $\Hc$. This is a standard assumption in the theoretical analysis. Sobolev space $H^s(\Omega)$, which is an appropriate function space for studying PDEs, is indeed a reproducing kernel Hilbert space provided that $s>d/2$ where $d$ is the dimension of the domain, i.e $\Omega\subset\R^d$. To relax this assumption, we could assume that the true solution $u$ can be approximated by an element $\Tilde{u}\in\Hc$. We plan to leave this as a future direction.
\end{remark}

With the kernel minimizer $\hat{u}\in\Hc$ at hand, our next step is approximating $\hat{u}$ using random features. We first adopt an alternative representation of preselected RKHS $\Hc$. 
Denote the corresponding random Fourier feature map (or random cosine feature map) by $\phi:X\to\C^N(\R^N)$, then we define the following function space 
\begin{equation}
\Fc(\rho) := \left\{f(\xb)=\int_{\R^d}\alpha(\omegab)\phi(\xb,\omegab)d\rho(\omegab): \|f\|^2_\rho = \E_{\omegab} [\alpha(\omegab)^2] <\infty \right\},
\end{equation}
where $\rho(\cdot)$ is the Fourier transform density associated with kernel $k$. 
Notice that the completion of $\Fc(\rho)$ is a Hilbert space equipped with RKHS norm $\|f\|_\rho$. 
Recall the Proposition 4.1 in \cite{Rahimi2008RFM}, it is indeed the reproducing kernel Hilbert space $\Hc$ with associated kernel function $k$. The endowed norms $\|\cdot\|_\Hc$ and $\|\cdot\|_\rho$ are equivalent. 

In the next theorem, we address the approximation ability of finite sum random feature model taking the form \eqref{RF_form}. 
\begin{theorem}
\label{Thm:RF}
Let $f$ be a function from $\Fc(\rho)$. Suppose that the random feature map $\phi$ satisfies $|\phi(\xb,\omegab)|\leq1$ for all $\xb\in X$ and $\omegab\in\R^d$. Then for any $\delta\in(0,1)$, there exists $c^\sharp_1,\dots,c^\sharp_N$ so that the function
\begin{equation}
\label{RF_approximator}
f^\sharp(\xb) = \sum_{k=1}^N c^\sharp_k\phi(\xb,\omegab_k) 
\end{equation}
satisfies
\begin{equation*}
\left| f(\xb) - f^\sharp(\xb)\right| \leq \frac{12\|f\|_\rho\log(2/\delta)}{\sqrt{N}}     
\end{equation*}
with probability at least $1-\delta$ over $\omegab_1,\dots,\omegab_N$ drawn i.i.d from $\rho(\omegab)$.
\end{theorem}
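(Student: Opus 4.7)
The natural plan is to choose the coefficients as a Monte Carlo estimator of the integral representation. Since $f\in\Fc(\rho)$, write $f(\xb)=\int\alpha(\omegab)\phi(\xb,\omegab)\,d\rho(\omegab)$ with $\E_\omegab[\alpha(\omegab)^2]=\|f\|_\rho^2$, and set $c_k^\sharp := \alpha(\omegab_k)/N$. Then $f^\sharp(\xb) = \frac{1}{N}\sum_{k=1}^N\alpha(\omegab_k)\phi(\xb,\omegab_k)$ is an unbiased empirical mean estimator of $f(\xb)$, since $\E_\omegab[\alpha(\omegab)\phi(\xb,\omegab)] = f(\xb)$ by the defining integral representation. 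Pointwise in $\xb$, the error $f^\sharp(\xb)-f(\xb)$ is thus a centered average of $N$ i.i.d.\ scalar random variables, and I plan to conclude by a scalar concentration inequality, repeated for each fixed $\xb$.

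The variance of each summand is easily controlled: using $|\phi(\xb,\omegab)|\leq 1$,
\begin{equation*}
\mathrm{Var}\bigl(\alpha(\omegab)\phi(\xb,\omegab)\bigr) \leq \E\bigl[\alpha(\omegab)^2\phi(\xb,\omegab)^2\bigr] \leq \E[\alpha(\omegab)^2] = \|f\|_\rho^2,
\end{equation*}
so $\mathrm{Var}(f^\sharp(\xb)) \leq \|f\|_\rho^2/N$. To promote this to the claimed high-probability deviation bound with a $\log(2/\delta)$ tail factor, I would apply a Bernstein-type inequality, which upgrades a second-moment estimate to a sub-exponential tail of the form $\Pbb(|f^\sharp(\xb)-f(\xb)|>t)\leq 2\exp\!\bigl(-Nt^2/(2\|f\|_\rho^2 + 2Bt/3)\bigr)$, where $B$ is an almost-sure upper bound on $|\alpha(\omegab)\phi(\xb,\omegab)|$. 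Inverting this at confidence $\delta$ and absorbing absolute constants produces the explicit factor $12$ in the stated bound.

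The main obstacle is that Bernstein requires the almost-sure bound $B$, while the representer $\alpha$ is only controlled in $L^2(\rho)$ by the RKHS norm. I would deal with this by a truncation: split $\alpha=\alpha\indicator_{|\alpha|\leq T}+\alpha\indicator_{|\alpha|>T}$, apply Bernstein to the bounded piece with $B=T$, and bound the bias contributed by the tail via Cauchy--Schwarz and Chebyshev, since $|\E[\alpha(\omegab)\phi(\xb,\omegab)\indicator_{|\alpha|>T}]|\leq \|f\|_\rho^2/T$. Choosing $T$ of order $\|f\|_\rho\sqrt{N}$ balances the truncation bias against the Bernstein linear term, both of order $\|f\|_\rho\log(2/\delta)/\sqrt{N}$, while the variance term contributes the smaller $\|f\|_\rho\sqrt{\log(2/\delta)/N}$. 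Since the coefficients $c_k^\sharp$ are allowed to depend on the drawn $\omegab_k$'s (the theorem only asserts existence), redefining them as $c_k^\sharp=\alpha(\omegab_k)\indicator_{|\alpha(\omegab_k)|\leq T}/N$ is permissible. The remainder is bookkeeping: tracking constants to arrive at the prefactor $12$, and noting that the hypothesis $|\phi|\leq 1$ is exactly what keeps the variance and truncation estimates dimension-free.
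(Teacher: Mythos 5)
Your proposal follows the paper's proof essentially step for step: the same truncation of the representer $\alpha$ at level $T$, the same choice of coefficients $c_k^\sharp=\alpha(\omegab_k)\indicator_{|\alpha(\omegab_k)|\leq T}/N$, the same Cauchy--Schwarz/Chebyshev bound $\|f\|_\rho^2/T$ on the truncation bias, the same Bernstein-type bound on the centered average of the bounded summands, and the same balancing choice $T=\|f\|_\rho\sqrt{N}$. This is the paper's argument; no gaps.
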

\begin{proof}
We first introduce notations $\alpha_{\leq T}(\omegab) = \alpha(\omegab)\indicator_{\left|\alpha(\omegab)\right|\leq T}$ and $\alpha_{>T} = \alpha(\omegab) - \alpha_{\leq T}(\omegab)$ for any $T>0$. Then we define 
\begin{equation}
c_k^\sharp = \alpha_{\leq T}(\omegab_k) \quad \mbox{ for all } k\in[N],
\end{equation}
where $\omegab_k$'s are i.i.d samples following a probability distribution with density $\rho(\omegab)$, and hence define $f^\sharp(\xb)$ in \eqref{RF_approximator} using $c^\sharp_k$'s.
We can show that
\begin{equation*}
\E f^\sharp(\xb) = \E_{\omegab} \left[ \alpha_{\leq T}(\omegab)\phi(\xb, \omegab) \right].
\end{equation*}
By utilizing the triangle inequality, we decompose the error into two terms
\begin{equation}
\left| f(\xb) - f^\sharp(\xb)\right| \leq \underbrace{\left| f(\xb) - \E f^\sharp(\xb)\right|}_{I_1} + \underbrace{\left| \E f^\sharp(\xb) - f^\sharp(\xb)\right|}_{I_2}.
\end{equation}
We first bound term $I_1$. Recalling the definitions of $f$ and $\alpha_{\leq T}(\omegab)$, we bound term $I_1$ as
\begin{equation}
\begin{aligned}
\label{term1}
\left| f(\xb) - \E f^\sharp(\xb) \right|^2 =& \Big| \E_{\omegab} \left[ \alpha_{>T}(\omegab)\phi(\xb,\omegab) \right] \Big|^2 
\leq \E_{\omegab} \left[\alpha(\omegab)\right]^2 \E_{\omegab} \big[ \indicator_{\left|\alpha(\omegab)\right|> T}\phi(\xb,\omegab)\big]^2  \\
=& \E_{\omegab} \left[\alpha(\omegab)\right]^2 \Pbb\big(\alpha(\omegab)^2>T^2\big)
\leq \frac{\Big(\E_{\omegab}[\alpha(\omegab)^2]\Big)^2}{T^2}
= \frac{\|f\|_\rho^4}{T^2}
\end{aligned}
\end{equation}
where we use the Cauchy-Schwarz inequality in the first line and the Markov's inequality in the second line. 

Next, we bound term $I_2$. For any $\xb\in X$, we define random variable $Z(\omegab) = \alpha_{\leq T}(\omegab)\phi(\xb,\omegab)$ and let $Z_1, \dots, Z_N$ be $N$ i.i.d copies of $Z$ defined by $Z_k = Z(\omegab_k)$ for each $k\in[N]$.
By boundedness of $\alpha_{\leq T}(\omegab)$, we have an upper bound $|Z_k| \leq T$ for any $k\in[N]$. The variance of $Z$ is bounded above as
\begin{equation*}
\sigma^2 := \E_{\omegab} |Z-\E_{\omegab} Z|^2 \leq \E_{\omegab} |Z|^2 \leq \E_{\omegab} [\alpha(\omegab)^2] = \|f\|^2_\rho. 
\end{equation*}
By Lemma A.2 and Theorem A.1 in \cite{lanthaler2023error}, it holds that, with probability at least $1-\delta$,
\begin{equation}
\label{term2}
\left|f^\sharp(\xb) - \E f^\sharp(\xb) \right| = \left|\frac{1}{N}\sum_{k=1}^N Z_k - \E_{\omegab} Z \right| \leq \frac{4T\log(2/\delta)}{N} + \sqrt{\frac{2\|f\|^2_\rho\log(2/\delta)}{N}}.
\end{equation}
Taking the square root for both sides of \eqref{term1}, and then adding it to \eqref{term2} gives 
\begin{equation*}
|f(\xb) - f^\sharp(\xb)| \leq \frac{\Big(\E_{\omegab}[\alpha(\omegab)^2]\Big)}{T} + \frac{4T\log(2/\delta)}{N} + \sqrt{\frac{2\|f\|^2_\rho\log(2/\delta)}{N}}.
\end{equation*}
Selecting $T=\sqrt{N}\|f\|_\rho$ gives the desired result. 
\end{proof}
The last theorem in this section presents the convergence rate of our proposed random feature model.
\begin{theorem}
\label{Thm:main}
Suppose that the conditions in Theorem \ref{Thm:kernel} and \ref{Thm:RF} hold. Then for any $\delta\in(0,1)$ there exists $c^\sharp_1,\dots,c^\sharp_N$ so that the function
\begin{equation*}
u^\sharp(\xb) = \sum_{k=1}^N c^\sharp_k\phi(\xb,\omegab_k) 
\end{equation*}
satisfies
\begin{equation*}
\| u - u^\sharp\|_{L^2(\Omega)} \leq Ch^\gamma\|u\|_\Hc + \frac{12\|u\|_\Hc\log(2/\delta)\vol(\Omega)}{\sqrt{N}}
\end{equation*}
with probability at least $1-\delta$ over $\omegab_1,\dots,\omegab_N$ drawn i.i.d from $\rho(\omegab)$.
\end{theorem}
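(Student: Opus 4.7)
The plan is to insert the kernel minimizer $\hat{u}$ of the optimal recovery problem \eqref{Minnorm_KernelPDE} between $u$ and $u^\sharp$ and split the error with the triangle inequality,
\[
\|u - u^\sharp\|_{L^2(\Omega)} \le \|u - \hat{u}\|_{L^2(\Omega)} + \|\hat{u} - u^\sharp\|_{L^2(\Omega)}.
\]
The first term is pure kernel-method error, which I will control with Theorem \ref{Thm:kernel}; the second is the random-feature approximation of a fixed element of $\Hc$, which I will control with Theorem \ref{Thm:RF}.

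For the kernel term, since the true solution $u \in \Hc$ is itself feasible for \eqref{Minnorm_KernelPDE}, the minimality of $\hat{u}$ yields $\|\hat{u}\|_\Hc \le \|u\|_\Hc$. Theorem \ref{Thm:kernel} then gives $\|u-\hat{u}\|_{H^s(\Omega)} \le C h^\gamma \|u\|_\Hc$, and composing with the continuous embedding $H^s(\Omega) \hookrightarrow L^2(\Omega)$ produces the $C h^\gamma \|u\|_\Hc$ contribution to the final bound.

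For the random-feature term, I first identify $\hat{u}$ as an element of $\Fc(\rho)$, using the equivalence of $\|\cdot\|_\Hc$ and $\|\cdot\|_\rho$ recorded in Proposition 4.1 of \cite{Rahimi2008RFM} so that $\|\hat{u}\|_\rho \le C\|u\|_\Hc$. Applying Theorem \ref{Thm:RF} with $f = \hat{u}$ produces coefficients $c_1^\sharp, \ldots, c_N^\sharp$ (depending only on $\omegab_1, \ldots, \omegab_N$, not on $\xb$) such that, with probability at least $1-\delta$ over the draw of the random features, the pointwise bound $|\hat{u}(\xb) - u^\sharp(\xb)| \le 12 \|\hat{u}\|_\rho \log(2/\delta)/\sqrt{N}$ holds. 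Squaring, integrating over $\Omega$, and taking the square root yields
\[
\|\hat{u} - u^\sharp\|_{L^2(\Omega)} \le \sqrt{\vol(\Omega)}\,\cdot\, \frac{12 \|u\|_\Hc \log(2/\delta)}{\sqrt{N}},
\]
which matches the stated bound up to the $\sqrt{\vol(\Omega)}$ versus $\vol(\Omega)$ factor (a minor imprecision, or tacitly absorbed when $\vol(\Omega)\ge 1$). Adding the two error contributions gives the claim.

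The one delicate point is converting the pointwise concentration of Theorem \ref{Thm:RF} into a bound on the whole $L^2$ norm: the theorem is stated for a single fixed $\xb$, and a naive union bound over $\xb \in \Omega$ would blow up. The saving grace is that the coefficients $c_k^\sharp = \alpha_{\le T}(\omegab_k)$ constructed in the proof of Theorem \ref{Thm:RF} are independent of $\xb$, so one may either (i) apply Fubini to swap the $\omegab$-expectation with the $\xb$-integral, obtaining a bound on $\E_\omegab \|\hat{u} - u^\sharp\|_{L^2(\Omega)}^2$, and then upgrade to high probability via Markov, or (ii) re-run the scalar Bernstein-type concentration used in the proof of Theorem \ref{Thm:RF} directly on the random $L^2$ norm of $\hat{u}-u^\sharp$. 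Either route requires some care with constants but no new idea; everywhere else the two error pieces concatenate cleanly.
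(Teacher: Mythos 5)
Your proof follows essentially the same route as the paper: triangle inequality through the kernel minimizer $\hat{u}$, Theorem \ref{Thm:kernel} for the first term, and Theorem \ref{Thm:RF} applied to $f=\hat{u}$ together with the minimality bound $\|\hat{u}\|_\Hc\leq\|u\|_\Hc$ for the second. The two points you flag are genuine and are in fact glossed over by the paper's own proof: it integrates the pointwise bound of Theorem \ref{Thm:RF} over $\Omega$ without addressing that the probability-$(1-\delta)$ event depends on $\xb$ (your Fubini-then-Markov repair, which works precisely because $c_k^\sharp=\alpha_{\leq T}(\omegab_k)$ is independent of $\xb$, is the cleanest fix), and the integration produces a factor $\sqrt{\vol(\Omega)}$ rather than the $\vol(\Omega)$ appearing in the stated bound.
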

\begin{proof}
Using the triangle inequality, we decompose the error as
\begin{equation*}
\| u - u^\sharp\|_{L^2(\Omega)} \leq \| u - \hat{u}\|_{L^2(\Omega)} + \| \hat{u} - u^\sharp\|_{L^2(\Omega)},  
\end{equation*}
where $\hat{u}\in\Hc$ is a minimizer of \eqref{Minnorm_KernelPDE} with interior collocation points $X_\Omega$ and collocation points on the boundary $X_{\partial\Omega}$. 
We directly apply Theorem \ref{Thm:kernel} to bound $\| u - \hat{u}\|_{L^2(\Omega)}$. The second term is bounded as
\begin{equation*}
\| \hat{u} - u^\sharp\|_{L^2(\Omega)} = \sqrt{ \int_{\Omega} \left| \hat{u}(\xb) - u^\sharp(\xb)\right|^2 d\xb} \leq \frac{12\|\hat{u}\|_\Hc\log(2/\delta)\vol(\Omega)}{\sqrt{N}} \leq \frac{12\|u\|_\Hc\log(2/\delta)\vol(\Omega)}{\sqrt{N}},
\end{equation*}
where the first inequality relies on the entry-wise bound in Theorem \ref{Thm:RF} and the second inequality holds since the strong solution $u\in\Hc$ satisfies the boundary conditions, and hence the minimizer $\hat{u}$ must satisfy $\|\hat{u}\|_\Hc \leq \|u\|_\Hc$. 
Adding the bounds together leads to the desired error bound.
\end{proof}

\section{Numerical experiments}
\label{Sec:numerics}
In this section, we test the performance of our proposed random feature method using several PDE benchmarks in the literature \cite{CHEN20211kernelPDE,WANG2024Extreme,xu2024kernel}. 
In addition, we numerically verify the convergence rate obtained in Theorem \ref{Thm:main}.
In all numerical experiments, we randomly generate training samples over the domains to train the models, and generate different test points to evaluate the PDE solutions. We compare the true solution and predicted solution on these test points (of size $M$) to compute the test error, which is defined as
\begin{equation*}
\mathop{\mathrm{Error}} = \frac{1}{M}\sum_{j=1}^{M} \Big( u(\xb_j) - \hat{u}(\xb_j) \Big)^2.
\end{equation*}
We consider nonlinear elliptic PDEs, nonlinear poisson PDEs, Allen-Cahn equation, and advection diffusion equation in the following sections. 
Detailed problem settings are clearly stated.
For each problem, we decide to use Gaussian random features. To implement Gaussian random features model, we should select an appropriate variance parameter $\sigma$. In theory, the variance parameter $\sigma$ affects the smoothness of functions in the corresponding RKHS $\Hc$. Recall the Gaussian kernel and the corresponding Gaussian density in Table \ref{tab:kernel} \footnote{The variance parameter $\sigma$ is indeed $\gamma$ in Table \ref{tab:kernel}. It is the variance parameter of the Gaussian density as well as the scale parameter of the Gaussian kernel.}, the larger variance parameter of the Gaussian density leads to a larger scale parameter in the Gaussian kernel, which results in a RKHS with functions that are more oscillatory. Conversely, a smaller scale parameter results in a more smooth RKHS. In practice, we could select the variance parameter $\sigma$ to match the regularity assumptions on the PDE solution. We could also treat it as a hyper-parameter and use cross-validation to select the best hyper-parameter.

We compared our proposed random features based method with PINN and ELM.
All experiments are implemented in Python based on Torch library. Notice that the training of ELM can be implemented by using a non-linear least-squares method or nonlinear iterative schemes such as Picard or Newton iterations, see examples \cite{DONG20211ELM, SHANG2023RPG, Shang2024RPG, SUN2025LRDG} among many others. We also test this training strategy in all numerical experiments.
Our codes are available on the repository: \url{https://github.com/liaochunyang/RF_PDE}.

\subsection{Nonlinear Elliptic PDEs}
We test with the instance of nonlinear elliptic PDE
\begin{equation*}
\begin{aligned}
-\Delta u(\xb) + u(\xb)^3 &= f(\xb), \quad && \xb\in\Omega \\
u(\xb) &= g(\xb), \quad && \xb\in\partial\Omega, 
\end{aligned}
\end{equation*}
where $\Omega = [0,1]^2$. The solution $u(\xb)=\sin(\pi x_1)\sin(\pi x_2) + 4\sin(4\pi x_1)\sin(4\pi x_2)$, and the right-hand side $f(\xb)$ is computed accordingly via the solution $u(\xb)$. The boundary condition is $g(\xb)=0$. This example was also considered in \cite{CHEN20211kernelPDE, xu2024kernel}. 
We randomly sample 1000 random features ($N=1000$) from normal distribution $\Nc(0,\sigma^2\Ib)$ with variance $\sigma^2=100$. 
The random feature model is trained on $M_\Omega=900$ interior collocation points and 124 collocation points on the boundary $\partial\Omega$. We take the uniform grids of size $100\times100$ as our test points. 
In Figure \ref{Fig:Nonlinear_PDE}, we show predicted solution using our proposed random feature model, true solution, and entry-wise absolute errors at test points. We observe that our proposed random feature model provides an accurate prediction of the true solution.
\begin{figure}[!htbp]
\centering 
\includegraphics[width=160mm]{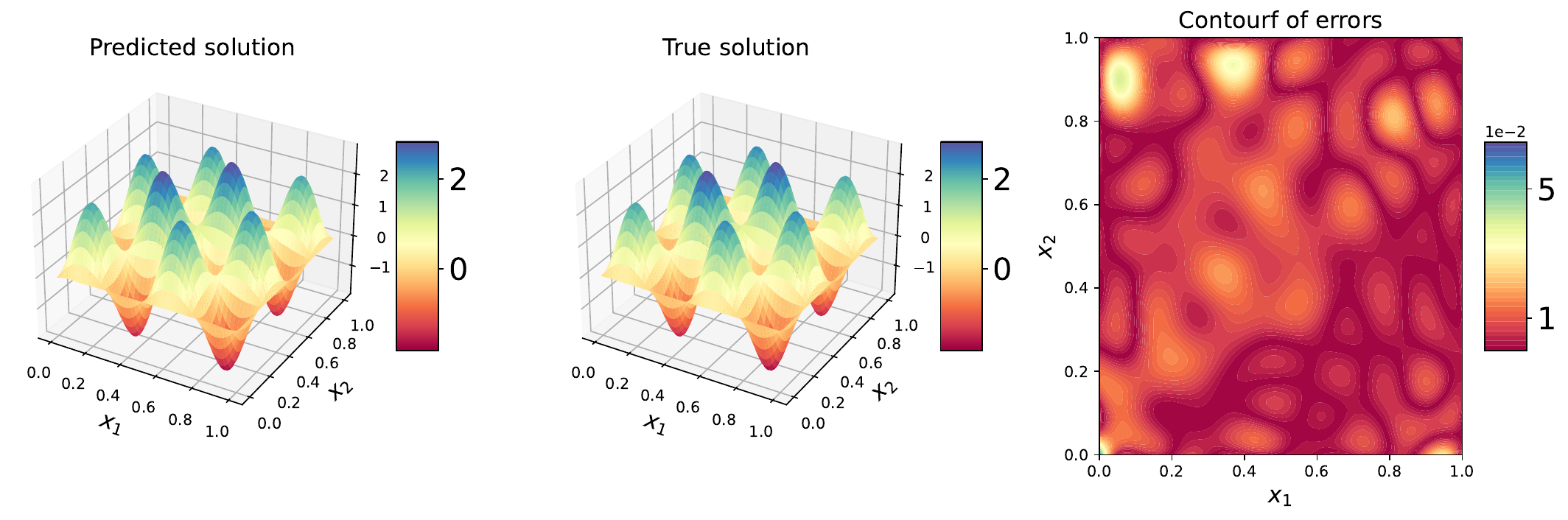}
\caption{Numerical results of the nonlinear elliptic PDE: we show the predicted solution, the true solution, and the entry-wise absolute error.}
\label{Fig:Nonlinear_PDE}
\end{figure}

We also compare the training epochs, test errors and training times of our proposed method with PINN and ELM, see results in Table \ref{tab:Nonlinear_PDE}. 
The PINN model has 2 hidden layers and each layer has 64 neurons. The nonlinear activation function is tanh function. 
The ELM model has 1 hidden layers with $N=1000$ neurons (the same number of neurons as the random feature model). Random weights in the ELM model are uniformly sampled from $\Uc[-R,R]$ where $R=0.05$. There is no bias term in the hidden layer. As the reference suggested \cite{WANG2024Extreme}, parameter $R$ should be selected using cross-validation. Here, we reported the lowest test error and the corresponding epochs and training time.
PINN and ELM models are trained on the same training samples and are tested on the uniform grid as well.
The first four rows of Table \ref{tab:Nonlinear_PDE} contains the numerical results of models trained by a stochastic gradient descent (SGD)-type algorithm. Then, we train our proposed RF model and ELM model using nonlinear least-squares (NLS) method and report results.
We observe that training PINN is complicated, which requires more epochs, and hence longer training time. If we set the same number of epochs, then the PINN model gives a bad prediction compared with random feature method. Moreover, our proposed method has smaller test error.
The ELM model and the RF model have similar training times, but our RF model has much lower test error than the ELM model.
Moreover, random features model and ELM can be trained efficiently by using nonlinear least-squares method. Under this training scheme, our proposed method also has better performance.
Overall, our proposed random feature model outperforms PINN and ELM in this example.

\begin{table}[!htbp]
\centering
\begin{tabular}{|c|c|c|c|}
\hline
 Method & Epochs & Test error & Training Time (Seconds)  \\\hline
 RF & 1000 & $6.71 \times 10^{-5}$ & 80.23 \\\hline
 PINN & 1000 & 1.23 & 13.42 \\\hline
 PINN & 10000 & $1.35 \times 10^{-2}$ &  169.28 \\\hline
 ELM & 1000 & 1.22 & 68.50 \\\hline
 RF-NLS & - & $2.59 \times 10^{-6}$ & 6.46\\\hline
 ELM-NLS & - & $5.39 \times 10^{-3}$ & 9.02\\\hline
\end{tabular}
\caption{Numerical results of the nonlinear elliptic PDE: we compare our proposed random feature method with PINN and ELM.}
\label{tab:Nonlinear_PDE}
\end{table}

Finally, we numerically verify the convergence rate of nonlinear elliptic PDE. We first fix the number of random features to be $N=100$ and varies the number of collocation points. We sample $M_\Omega = 400, 900, 1600$ points uniformly in the domain, and  $M_{\partial\Omega}=84, 124, 164$ points uniformly on the boundary. We sample another set of 100 test points and evaluate the test errors. 
In the second experiment, we fix $M_\Omega = 400$ interior collocation points and $M_{\partial\Omega}=84$ points on the boundary. We take different numbers of random features, i.e. $N=100,200,300$. We sample another set of 100 test points and evaluate the test errors. In Figure \ref{Fig:Nonlinear_PDE_rate}, we show the test errors as a function of the number of collocation points and a function of the number of random features, respectively. 
For each point in the figure, we repeat the experiments 10 times and take the average.
\begin{figure}[!htbp]
\centering 
\subfigure[]{\includegraphics[width=60mm]{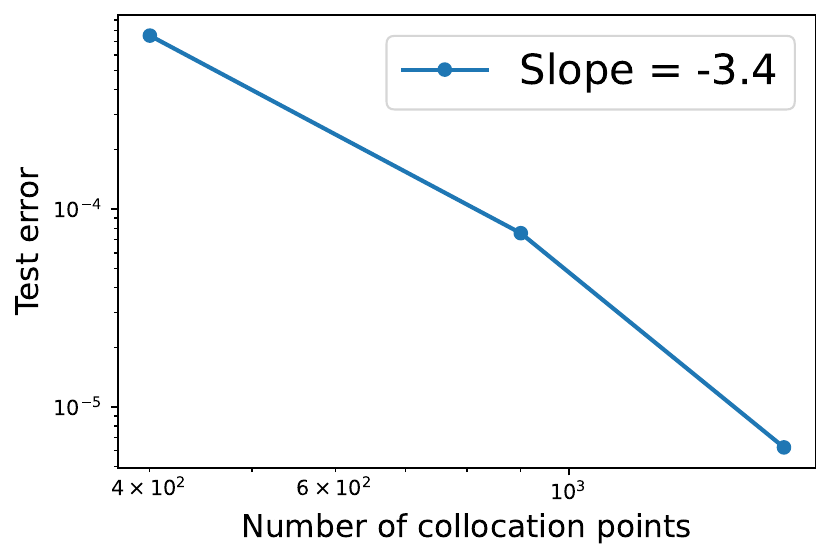}}
\subfigure[]{\includegraphics[width=60mm]{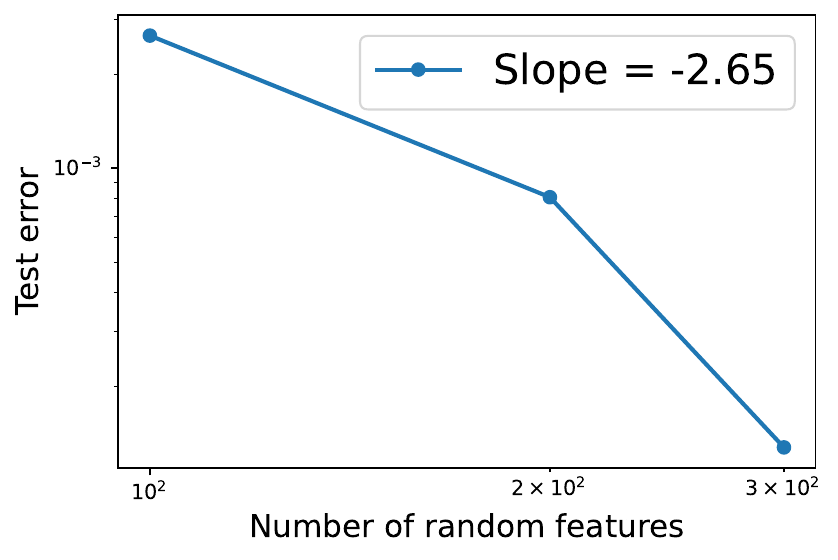}}
\caption{Test error as a function of the number of collocation points, and the number of random features, respectively. Slopes reported in the legends denote empirical convergence rates.}
\label{Fig:Nonlinear_PDE_rate}
\end{figure}

\subsection{Nonlinear Poisson PDE}
In this section, we test our proposed method with high-dimensional nonlinear Poisson PDEs. We consider the domain $\Omega = [-1,1]^d$ and the following problem defined on $\Omega$,
\begin{equation*}
\begin{aligned}
-\nabla\cdot (a(u)\nabla u) &= f(\xb), \quad && \xb\in\Omega \\
u(\xb) &= g(\xb), \quad && \xb\in\partial\Omega,
\end{aligned}
\end{equation*}
where $a(u)=u^2-u$. The solution is crafted as $u(\xb) = \exp(-\frac{1}{d}\sum_{i=1}^dx_i)$. The function $g(\xb)=u(\xb)$ on the boundary, and the right-hand side $f(\xb)$ is computed using the true solution, which has the explicit expression
\begin{equation*}
f(\xb) = \frac{1}{d}\left[-3\exp\left(-\frac{3}{d}\sum_{i=1}^dx_i\right) + 2\exp\left(-\frac{2}{d}\sum_{i=1}^dx_i\right)\right].  
\end{equation*}
We first test with the 2D nonliear Poisson PDE and show the predicted solution, true solution, and entry-wise absolute errors in Figure \ref{Fig:Poisson}. 
The training samples of size 1024 contains 900 interior collocation points and 124 boundary points, which are uniformly generated over the domain $\Omega = [-1,1]^2$ and its boundary $\partial \Omega$, respectively. We randomly generate 500 test samples to evaluate the performance. 

\begin{figure}[!htbp]
\centering 
\includegraphics[width=160mm]{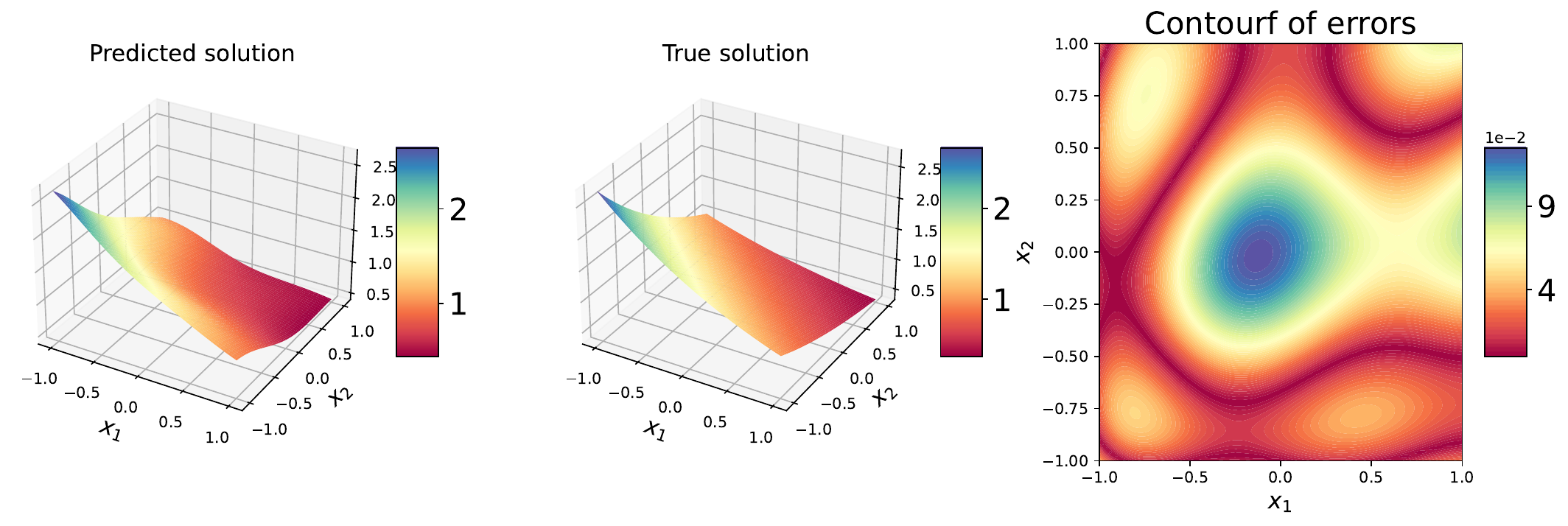}
\caption{Numerical results of nonlinear Poisson PDE: we show the predicted solution using random feature model, true solution, and corresponding entry-wise errors at test points.}
\label{Fig:Poisson}
\end{figure}

We first note that the accuracy of our model is related to the choice of variance of Gaussian random feature, but it is not sensitive to the variance. Usually, we can tune the hyperparameter using cross-validation. In Table \ref{tab:Poisson_var}, we report the variances and the corresponding test errors for $d=8$ dimension nonlinear Poisson PDE. In this example, it is better to use small variance, but the test error is not very sensitive to the choice of variance when it is smaller than some threshold.

\begin{table}[!htbp]
\centering
\begin{tabular}{|c|c|c|c|c|c|}
\hline
$\sigma^2$ & 100 & 1 & 0.04 & 0.01 & 0.0025   \\\hline
Test error & $2.14\times10^{-1}$ & $3.62\times10^{-2}$ & $7.31\times10^{-3}$ & $4.81\times10^{-4}$ & $4.05\times10^{-4}$ \\\hline
\end{tabular}
\caption{Variances of Gaussian random features and the corresponding test errors for nonlinear Poisson PDE ($d=8$). The training sample size $M=1024$ and each error is calculated on a set of test samples with size 100. Number of random features $N$ is set to 1000, following the standard Normal distribution.}
\label{tab:Poisson_var}
\end{table}

Furthermore, we compare our proposed RF-based model with PINN model and ELM model. The physics-informed neural network has 2 hidden layers and 64 neurons at each layer. The ELM model has 1 hidden layer with number of neurons reported in Table \ref{tab:Poisson}. In all the following experiments, random weights in ELM model are randomly sampled from uniform distribution $\Uc[-0.05,0.05]$ and there is no bias term.
We select tanh function as the activation function for both PINN and ELM.
We test 2D nonlinear Poisson PDE as well as high-dimensional nonlinear Poisson PDEs ($d=2,4$ and $8$). 
We also train random feature model and ELM using nonlinear least-squares method.
All numerical results are summarized in Table \ref{tab:Poisson}. 
We observe that our proposed random feature model achieves similar performance or even beats the PINN models in terms of the test error. 
While ELM models have less training times, our proposed random feature based models always outperform the ELM models cross all examples in terms of the test error.

\begin{table}[!htbp]
\centering
\begin{tabular}{|c|c|c|c|c|c|}
\hline
Dimension & Method & $N$ & Epochs & Test error & Training Time (Seconds)  \\\hline
\multirow{5}{*}{$d=2$} & RF & 500 & 1000 & $3.19 \times 10^{-3}$ & 42.29 \\\cline{2-6}
 & PINN & - & 2000 & $5.28 \times 10^{-3}$ &  51.46 \\\cline{2-6}
  & ELM & 500 & 2000 & $3.21 \times 10^{-2}$ &  28.25 \\\cline{2-6} 
  & RF-NLS & 500 & - & {\bf $6.39 \times 10^{-5}$} & 6.37 \\\cline{2-6}
 & ELM-NLS & 500 & - & $1.22 \times 10^{-4}$ & 5.33 \\\hline
\multirow{5}{*}{$d=4$} & RF & 500 & 1000 & $6.51 \times 10^{-3}$ & 51.24 \\\cline{2-6}
 & PINN & - & 2000 & $4.27 \times 10^{-3}$ &  55.49 \\\cline{2-6}
 & ELM & 500 & 1000 & $3.00 \times 10^{-2}$ & 53.04 \\\cline{2-6}
 & RF-NLS & 500 & - & {\bf $1.92 \times 10^{-4}$} & 7.11 \\\cline{2-6}
 & ELM-NLS & 500 & - & $1.12 \times 10^{-3}$ & 5.88 \\\hline
 \multirow{5}{*}{$d=8$} & RF & 100 & 1500 & $8.43\times 10^{-3}$ & 37.84 \\\cline{2-6}
 & PINN & - & 2000 & $9.91 \times 10^{-3}$ &  54.14 \\\cline{2-6}
 & ELM & 100 & 1000 & $2.93 \times 10^{-2}$ & 33.05 \\\cline{2-6}
 & RF-NLS & 500 & - & {\bf $7.35 \times 10^{-4}$} & 7.76 \\\cline{2-6}
 & ELM-NLS & 500 & - & $1.25 \times 10^{-3}$ & 4.82 \\\hline
\end{tabular}
\caption{Comparison between random feature, PINN and ELM models for the high-dimensional nonlinear Poisson PDEs. We report the number of random features, number of epochs, test error, and training time for each model.}
\label{tab:Poisson}
\end{table}

In Figure \ref{Fig:Poisson_rate}, we report the empirical convergence rates for the nonlinear Poisson equations. Figure \ref{Fig:Poisson_rate}(a) shows the test error as a function of the number of collocation points. For each dimension, we fix $N=100$ random features and varies the number of collocation points. We uniformly sample $M_\Omega = 100,400,900$ interior points, and $M_{\partial\Omega} = 44,84,124$ boundary points. We sample a different set of 100 test points to evaluate the test errors. Figure \ref{Fig:Poisson_rate}(b) shows the test error as a function of the number of random features. For each dimension, we fix the collocation points of size 484 with 400 interior collocation points. We take $N=100,400$ random features. We average over 10 experiments to produce each point in Figure \ref{Fig:Poisson_rate}. We observe that our theoretical upper bound is numerically verified, which indicates that our proposed random feature model does not suffer "curse of dimensionality". 

\begin{figure}[!htbp]
\centering 
\subfigure[]{\includegraphics[width=60mm]{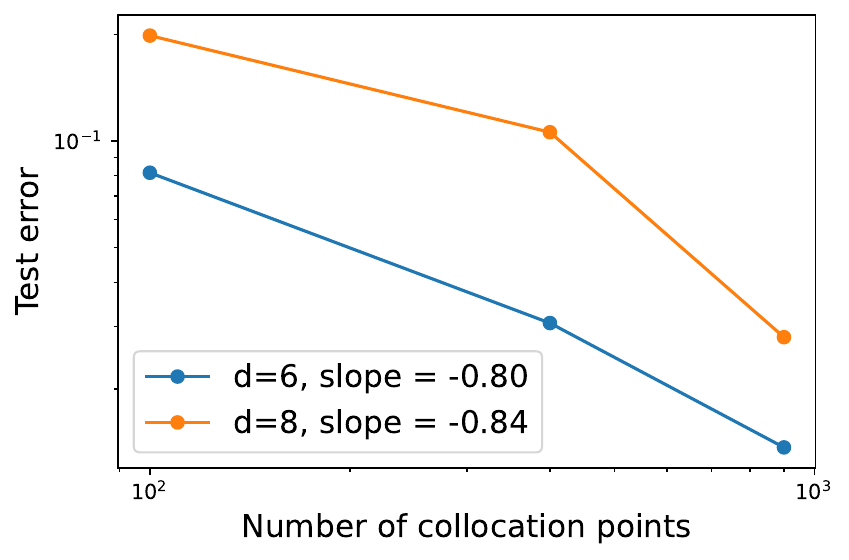}}
\subfigure[]{\includegraphics[width=60mm]{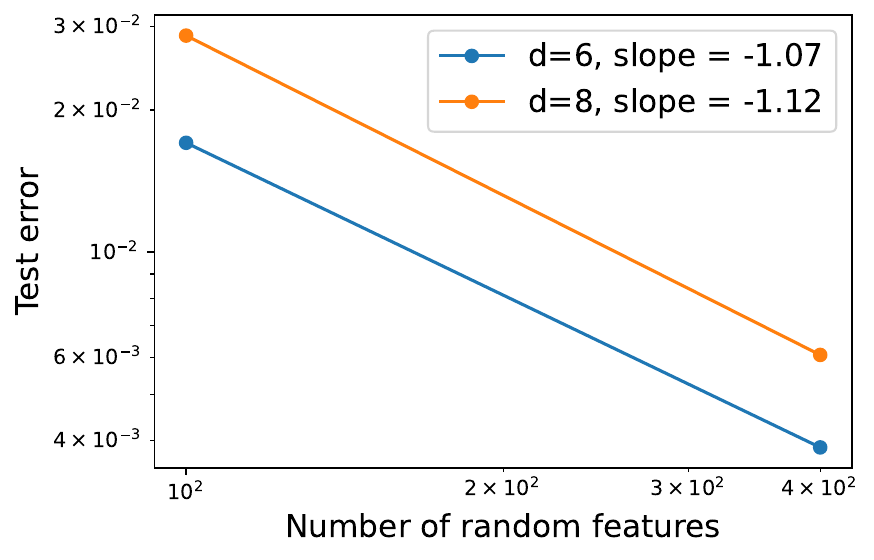}}
\caption{Test error as a function of the number of collocation points, and the number of random
features, respectively. Slopes reported in the legends denote empirical convergence rates.}
\label{Fig:Poisson_rate}
\end{figure}

\subsection{Allen-Cahn Equation}
Next, we consider a 2D stationary Allen-Cahn equation with a source function and Dirichlet boundary conditions, i.e.
\begin{equation*}
\Delta u + \gamma(u^m-u) = f(\xb), \quad \xb\in[0,1]^2,
\end{equation*}
where $\gamma=1$ and $m=3$. The solution takes the form $u(\xb)=\sin(2\pi ax_1)\cos(2\pi ax_2)$, and the function $f(\xb)$ is computed using the solution $u(\xb)$. Positive parameter $a$ controls the frequency of the solutions. We test two cases $a=1$ and $a=10$.

\begin{table}[!htbp]
\centering
\begin{tabular}{|c|c|c|c|c|c|c|}
\hline
Frequency & Method & $N$ & $\sigma^2$ & Epochs & Test error & Training Time (Seconds)  \\\hline
\multirow{5}{*}{$a=1$} & RF & 200 & $10^2$ & 1000 & $7.80 \times 10^{-6}$ & 14.27 \\\cline{2-7}
 & PINN & - & - & 1000 & $2.45 \times 10^{-1}$ & 8.64 \\\cline{2-7}
 & ELM & 200 & - & 1000 & $2.45 \times 10^{-1}$ & 17.40 \\\cline{2-7}
 & RF-NLS & 200 & $10^2$ & - & $3.29 \times 10^{-8}$ & 0.63 \\\cline{2-7}
 & ELM-NLS & 200 & - & - & $6.16 \times 10^{-3}$ & 0.64 \\\hline
\multirow{5}{*}{$a=10$} & RF & 200 & $100^2$ & 2000 & $1.12 \times 10^{-4}$ & 25.37 \\\cline{2-7}
 & PINN & - & - & 2000 & $1.04 \times 10^{+1}$ &  18.48 \\\cline{2-7}
 & ELM & 400 & - & 1000 & $2.45 \times 10^{-1}$ & 18.27 \\\cline{2-7}
 & RF-NLS & 200 & $100^2$ & - & $2.10 \times 10^{-4}$ & 1.63 \\\cline{2-7}
 & ELM-NLS & 200 & - & - & $3.16 \times 10^{-3}$ & 1.34 \\\hline
\end{tabular}
\caption{Comparison between our proposed random feature model and two state-of-the-art benchmarks (PINN and ELM) for the Allen-Cahn equations. We report number of epochs, test error, and training time for each model. For the random feature model, we also report the number of random features and variance of Gaussian random features. }
\label{tab:Allen_Cahn}
\end{table}

We first compare the test performances. In each case, we randomly sample $M=1024$ points with $M_\Omega=900$ interior collocation points to train models. 
The test performances for both models are evaluated on 100 test samples, which are uniformly generated over the domain and boundary. We report the parameter selections and summarize the numerical results in Table \ref{tab:Allen_Cahn}.
From the numerical results, we observe that the random feature models beat PINNs and ELMS cross all cases, especially when the frequency parameter $a$ is large.  
It might be related to the known "spectral bias" of neural networks \cite{basri2020spectral,rahaman2019spectral,basri2019spectral}. Precisely, neural networks trained by gradient descent fit a low frequency function before a high frequency one. Therefore, it is difficult for PINNs to learn the high frequency PDE solutions.
To alleviate "spectral bias" and learn high frequency functions, previous work proposed to use Fourier features and both theoretically and empirically showed that a Fourier feature mapping can improve the performance \cite{matthew2020fourier}. Fourier features have been used to solve high frequency PDEs, see \cite{WANG2021fourier}. 
Our theory suggests that a larger variance parameter $\sigma$ results in a more oscillatory RKHS. Our numerical experiemnts also verify the theory since higher frequency in the PDE solutions leads to a larger variance of Gaussian random feature.
Moreover, it requires more random features and epochs to train the random feature model as the frequency increasing. In Figure \ref{Fig:Allen-cahn}, we show predicted solution, true solution and entry-wise errors at test points.

\begin{figure}[!htbp]
\centering 
\includegraphics[width=160mm]{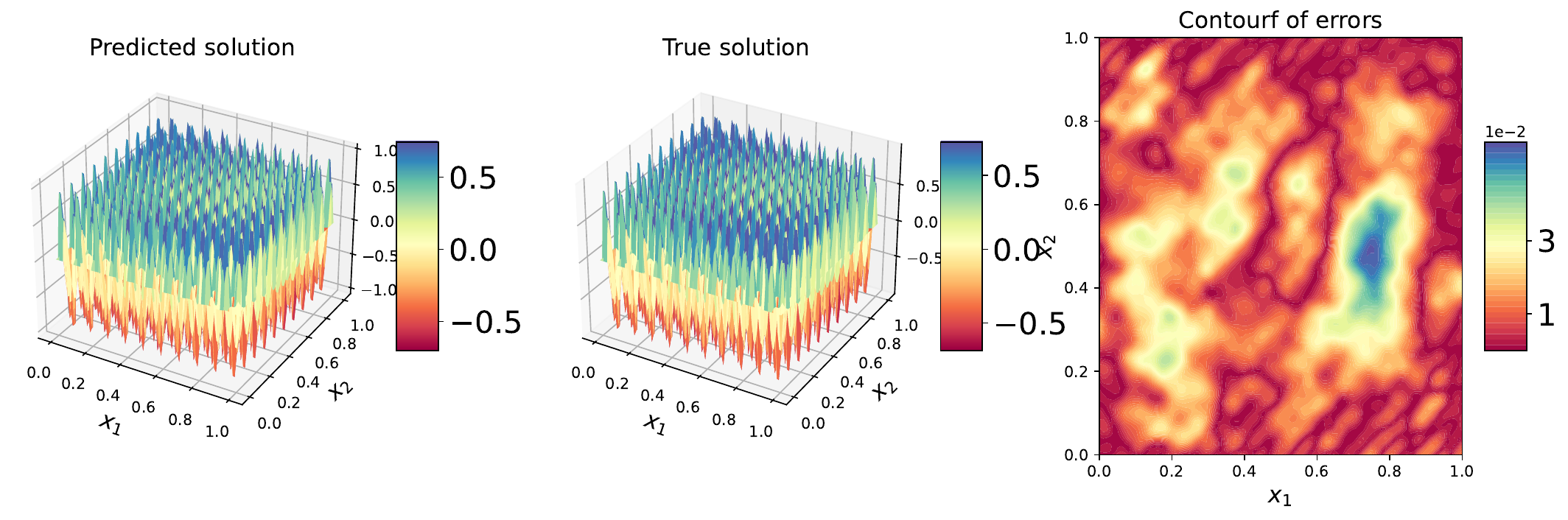}
\caption{Numerical results for the Allen-Cahn equation with frequency parameter $a=10$: we show the predicted solution, true solution, and corresponding entry-wise errors at test points.}
\label{Fig:Allen-cahn}
\end{figure}

Figure \ref{Fig:Allen_rate} illustrates the numerical verifications of the convergence rate of Allen-Cahn equation.  
We first show the test error as a function of the number of collocation points. In this experiment, we fix the number of random features to be $N=100$. To produce the collocation points, we uniformly sample $M_\Omega = 400, 900, 1600$ points in the domain, and $M_{\partial\Omega}=84, 124, 164$ points on the boundary. We sample another set of 100 test points to evaluate the test errors. 
In the second experiment, where we show the test error as a function of the number of random features, we uniformly sample and then fix $M_\Omega = 400$ interior collocation points and $M_{\partial\Omega}=124$ points on the boundary. We sample another set of 100 test points and evaluate the test errors.
We generate $N=100,200,400$ random features from Gaussian distribution. In Figure \ref{Fig:Allen_rate}, we show the test errors as a function of the number of collocation points and a function of the number of random features, respectively. 
For each point in the figure, we repeat the experiments 10 times and take the average.

\begin{figure}[!htbp]
\centering 
\subfigure[]{\includegraphics[width=60mm]{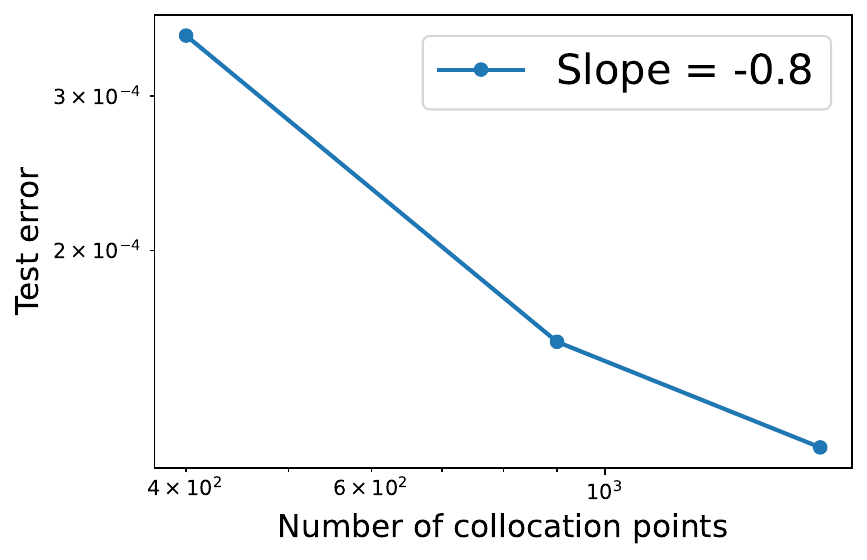}}
\subfigure[]{\includegraphics[width=60mm]{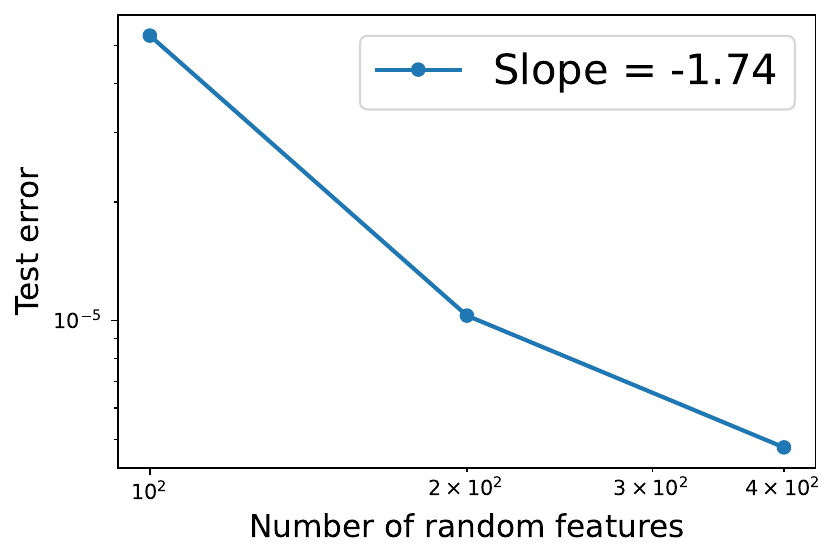}}
\caption{Allen-Cahn equation: test error as a function of the number of collocation points, and the number of random
features, respectively. Slopes reported in the legends denote empirical convergence rates.}
\label{Fig:Allen_rate}
\end{figure}

\subsection{Advection Diffusion Equation}
Finally, we test our method with advection diffusion equation. Consider the initial boundary value problem on the spatial-temporal domain $(x,t)\in[-1,1]\times[0,1]$, the PDE is 
\begin{equation*}
\begin{aligned}
u_t - u_{xx} + u_x &= f(x,t), \quad && (x,t) \in [-1,1]\times[0,1] \\
u(x,t) &= g(x,t), \quad && (x,t)\in\{-1,1\}\times[0,1], \\
u(x,0) &= h(x), \quad && x\in[-1,1]. 
\end{aligned}
\end{equation*}
The true solution is employed as $u(x,t) = \sin(x)\exp(-t)$. Functions $f(x,t)$, $g(x,t)$, and $h(x)$ are set according to the true solution. 
When we simulate this problem, we treat the time variable $t$ in the same way as the spatial variable $x$. We uniformly generate 1000 training samples over $[-1,1]\times[0,1]$. We enforce the boundary condition on 100 collocations points and the initial condition on 200 collocations points. 
Gaussian random features ($N=100$) are randomly sampled from standard Normal distribution (variance $\sigma^2=1$). 
The PINN model has 2 hidden layers with 64 neurons at each layer. 
The ELM model has 1 hidden layer with 100 neurons. Random weights are randomly sampled from uniform distribution $\Nc[-0.05,0.05]$. 
Since this example is indeed a linear PDE, we can train random feature model and ELM by solving a linear system. We consider the least-squares solution (LS) if the linear system is overdetermined and the min-norm interpolation solution when the linear system is underdetermined.
We report number of epochs, the test errors and training times for both models in Table \ref{tab:advection}. 
In this example, our proposed method achieves similar test error as PINN and outperforms ELM provided that they have the same amount of trainable parameters. In terms of training process, training RF model is simpler in the sense that it requires less epochs and the training time of random feature model is around 50\% of that of PINN model. Moreover, solving least-squares problem is much faster than implementing a SGD-type algorithm at the expense of losing test accuracy.
In Figure \ref{Fig:Advection}, we compare the predicted solution and true solution at various times $t = 0.1, 0.5, 0.9$ to further highlight the ability of our method in learning the true solution.

\begin{table}[!htbp]
\centering
\begin{tabular}{|c|c|c|c|}
\hline
 Method & Epochs & Test error & Training Time (Seconds)  \\\hline
 RF & 600 & $4.82\times 10^{-4}$ & 14.28 \\\hline
 PINN & 1000 & $4.64\times 10^{-4}$ & 30.78 \\\hline
 ELM & 1000 & $9.76 \times 10^{-3}$ & 23.48 \\\hline
 RF-LS & - & $3.38 \times 10^{-2}$ & 0.21 \\\hline
 ELM-LS & - & $3.53 \times 10^{-2}$ & 0.23 \\\hline
\end{tabular}
\caption{Numerical results of the advection diffusion equation: we compare our proposed random feature method with PINN and ELM.}
\label{tab:advection}
\end{table}

\begin{figure}[!htbp]
\centering 
\subfigure[$t=0.1$]{\includegraphics[width=45mm]{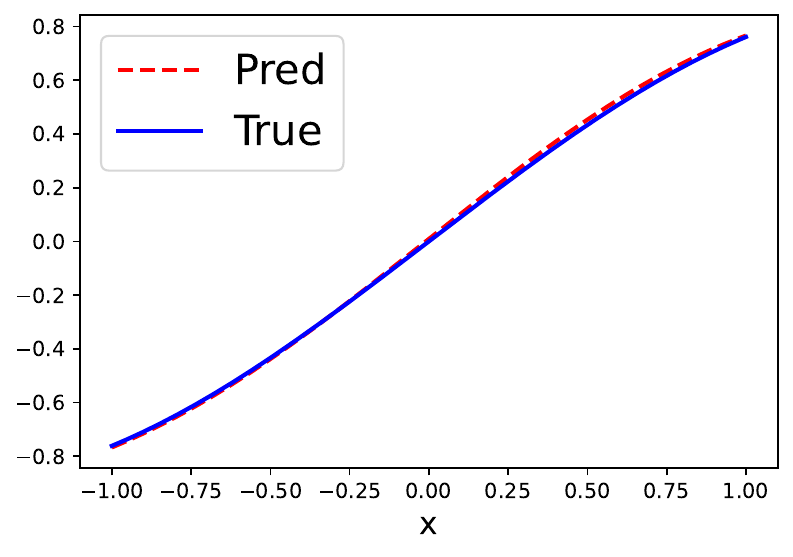}}
\subfigure[$t=0.5$]{\includegraphics[width=45mm]{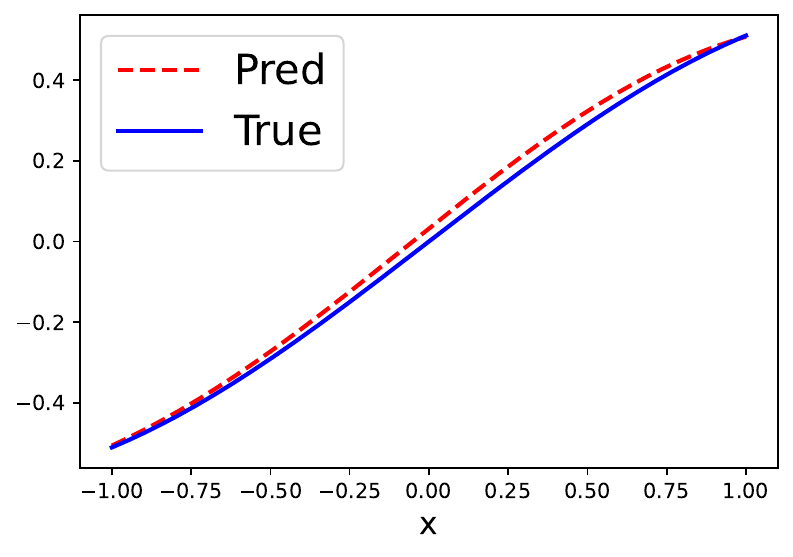}}
\subfigure[$t=0.9$]{\includegraphics[width=45mm]{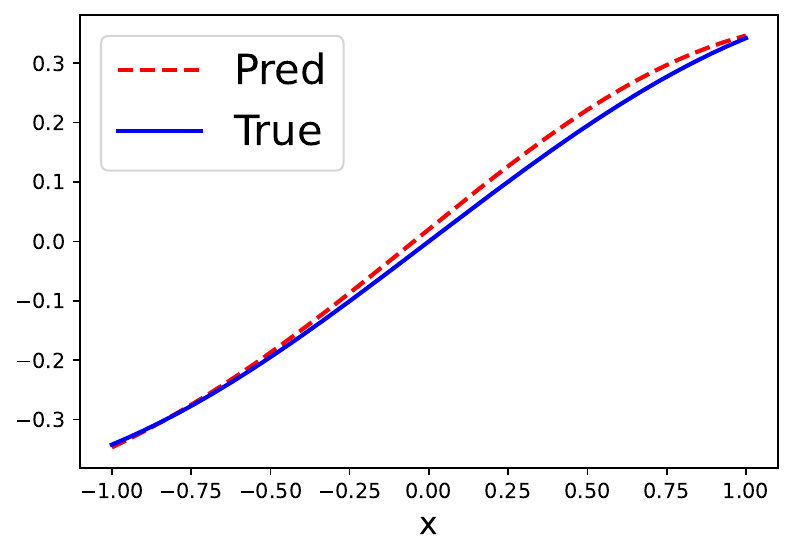}}
\caption{Numerical results of advection diffusion equation: predicted solution and true solution at time slices $t = 0.1, 0.5, 0.9$.}
\label{Fig:Advection}
\end{figure}

Finally, we perform a convergence study for advection-diffusion equation. In Figure \ref{Fig:Advection_rate}(a), we show the test errors as a function of the number of collocation points. We use $M=100,200,400$ collocation points, which are uniformly generated from $[-1,1]\times[0,1]$. We use 100 points on the boundary and 200 points for initial values. In Figure \ref{Fig:Advection_rate}(b), we use 200 interior points, 100 boundary points, and 200 points for initial condition, respectively. We vary the number of random features, i.e $N=100,200,400$. For each point in the figure, we take the average over 10 repetitions of the experiment. 

\begin{figure}[!htbp]
\centering 
\subfigure[]{\includegraphics[width=60mm]{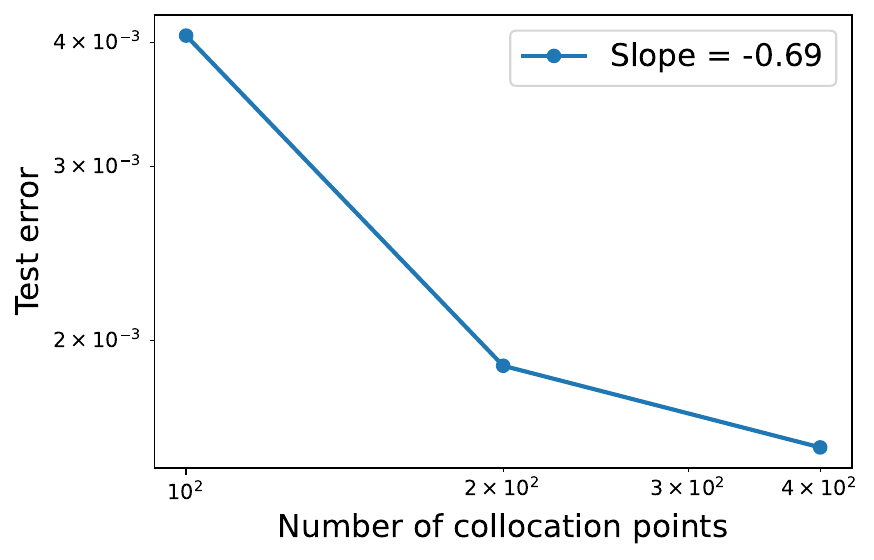}}
\subfigure[]{\includegraphics[width=60mm]{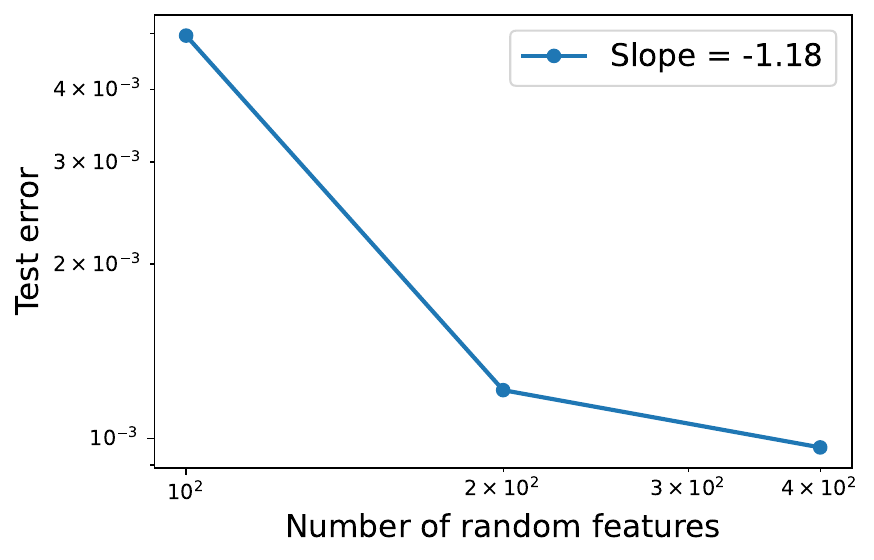}}
\caption{Advection-diffusion equation: test error as a function of the number of collocation points, and the number of random features, respectively. Slopes reported in the legends denote empirical convergence rates.}
\label{Fig:Advection_rate}
\end{figure}

\section{Conclusion}
\label{Sec:conclusion}
In this paper, we propose a random feature model for solving partial differential equations along with an error analysis. By utilizing some techniques from probability, we provide convergence rates of our proposed method under some mild assumptions on the PDE. Our framework allows convenient implementation and efficient computation. Moreover, it easily scales to massive collocation points, which are necessary for solving challenging PDEs. We test our method on several PDE benchmarks. The numerical experiments indicate that our method either matches or beats state-of-the-art models and reduces the computational cost. 

Finally, we conclude with some directions for future work. First, our analysis does not directly address the minimizer we obtained by solving an optimization problem. It requires us to analyze a min-norm minimization problem with some nonlinear constraints. 
Second, while it is natural to sample random features from the Fourier transform density, it is advantageous to sample from a different density which has been shown to yield better performance.
Third, we assume that the PDE at hand is well- defined pointwise and has a unique strong solution. Extension our framework to weak solution is left for future work. 

\bibliography{refs}

\end{document}